%
%
%
%
%
%
\documentclass[letterpaper,11 pt]{article}
\usepackage{graphicx}
\usepackage{amsfonts,amsmath,fullpage,bbm}
\usepackage{amssymb,amsthm,multirow,verbatim}
\usepackage{acronym,wrapfig,plain,mathrsfs,enumerate,relsize,color}
\newtheorem{algorithm}{Algorithm}
\usepackage{algorithm}
\newtheorem{theorem}{Theorem}
\newtheorem{corollary}{Corollary}
\newtheorem{lemma}{Lemma}

\newtheorem{remark}{Remark}

\newtheorem{assumption}{Assumption}
\newtheorem{definition}{Definition}
\usepackage{subfig}
\usepackage{algorithmic}
\usepackage{pifont}
\usepackage{footmisc}

\allowdisplaybreaks
%
%
%
%
%
\newcommand{\aj}[1]{{\color{black}#1}}
\newcommand{\mb}[1]{{\color{black}#1}}
\newcommand{\af}[1]{{\color{black}#1}}
\begin{document}
\allowdisplaybreaks
\title{Inexact-Proximal Accelerated Gradient Method for Stochastic Nonconvex Constrained Optimization Problems
}


\author{Morteza Boroun, Afrooz Jalilzadeh\footnote{ Department of Systems and Industrial Engineering, The University of Arizona, 1127 E James Rogers Way, Tucson, AZ. Email: morteza@email.arizona.edu, afrooz@arizona.edu.}}

\date{}

\maketitle
\begin{abstract}
Stochastic nonconvex optimization problems with nonlinear constraints have a broad range of applications in intelligent transportation, cyber-security, and smart grids. In this paper, first, we propose an inexact-proximal accelerated gradient  method to solve a nonconvex stochastic composite optimization problem where the objective is the sum of smooth and nonsmooth functions, the constraint functions are assumed to be deterministic and the solution to the proximal map of the nonsmooth part is calculated inexactly at each iteration. We demonstrate an asymptotic sublinear rate of convergence for stochastic settings using increasing sample-size considering the error in the proximal operator diminishes at an appropriate rate. Then we customize the proposed method for solving stochastic nonconvex optimization problems with nonlinear constraints and demonstrate a convergence rate guarantee. Numerical results show the effectiveness of the proposed algorithm.
\end{abstract}
\section{INTRODUCTION}
\label{sec:intro}

There is a rapid growth in the global urban population and the concept of smart cities is proposed to manage the impact of this surge in urbanization. Intelligent transportation, cyber-security, and smart grids are playing vital roles in smart city projects which are highly influenced by big data analytic and effective use of machine learning techniques \cite{ullah2020applications}. As data gets more complex and applications of machine learning algorithms for decision-making broaden and diversify, recent research has been shifted to constrained optimization problems with nonconvex objectives \cite{ma2017demand} to improve efficiency and scalability in smart city projects.

Consider the following constrained optimization problem with a stochastic and nonconvex objective: 
\begin{align}\label{p0}
\nonumber \min_{x\in X} \quad &f(x)\triangleq \mathbb E[F(x,\zeta(\omega))]\\
 \mbox{s.t.} \quad &\phi_i(x)\leq 0,\quad i=1,\hdots,m,
\end{align} 
where $ {\zeta}: \Omega \rightarrow
\mathbb{R}^o$, ${F}: \mathbb{R}^n \times \mathbb{R}^o \rightarrow
\mathbb{R}$, {and} $(\Omega,\mathcal{F},\mathbb{P})$ denotes the associated
probability space. We consider function $f(x): \mathbb R^n\to \mathbb R$ is smooth and possibly nonconvex, $\phi_i(x):\mathbb R^n\to \mathbb R$ are \af{deterministic,} convex, and smooth for all $i$, and set $X$ is convex and compact. To solve this problem, first we propose an algorithm for solving the following composite optimization problem  
\begin{align}\label{p1}
&\min_{x\in \mathbb R^n} g(x)\triangleq f(x)+h(x),
\end{align} 
where $h(x): \mathbb R^n\to \mathbb R$ is a convex function and possibly nonsmooth. 
Using the indicator function $\mathbb{I}_\Theta(\cdot)$, where $\mathbb{I}_\Theta(x)=0$ if $x\in \Theta$ and $\mathbb{I}_\Theta(x)=+\infty$ if $x\notin \Theta$, one can write problem \eqref{p0} in the form of problem \eqref{p1} by choosing 
 $h(x)= \mathbb{I}_\Theta(x)$ and $\Theta=\{x\mid x\in X, \ \phi_i(x)\leq 0, \ \forall i=1,\hdots,m\}$. Moreover, we show that how to customize the  proposed method to solve problem \eqref{p0}.  Indeed, proximal-gradient methods are an appealing approach for solving \eqref{p1} due to their computational efficiency and fast theoretical convergence guarantee. In deterministic and convex regime, subgradient methods have been shown to have a convergence rate of $\mathcal O(1/\sqrt{T})$, however, proximal-gradient methods can achieve a faster rate of \af{$\mathcal O(1/ T)$,  \af{where $T$ is the total number of iterations}}. Each iteration of a proximal-gradient method requires solving the following:
\begin{align}\label{prox}
\mbox{prox}_{\gamma,h}(y)=\underset{u\in \mathbb R^n}{\mbox{argmin}}\{h(u)+{1\over 2\gamma}\|u-y\|^2\}.
\end{align}
In many scenarios, computing the exact solution of the proximal operator may be expensive or may not have an analytic solution. In this work, we propose a gradient-based scheme to solve the nonconvex optimization problem \eqref{p1} by computing the proximal operator inexactly at each iteration. 

Next, we introduce important notations that we use throughout the paper and then briefly summarize the related research.  
\subsection{Notations}
We denote the optimal objective
value (or solution) of  \eqref{p1} by $g^*$ (or $x^*$) and the set of the optimal
solutions by $X^*$, which is assumed to be nonempty. For any $a\in \mathbb R$, we define $[a]_+=\max\{0,a\}$. $\mathbb{E}[\bullet]$ denotes the expectation with respect to
the probability measure $\mathbb{P}$ and $\mathcal B(s)=\{x\in \mathbb R^n \mid \|x\|\leq s\}$.  $\Pi_\Theta(\cdot)$ denotes the projection onto convex set $\Theta$ and $\mbox{\bf relint}(X)$ denotes the relative interior of the set $X$. Throughout the paper, $\mathcal{\tilde O}$ is used to suppress all the logarithmic terms. 
\subsection{Related Works}
There has been a lot of studies on first-order methods for convex optimization with convex constraints, see \cite{tran2014primal,xu2019iteration} for deterministic constraints and \cite{basu2019optimal,lan2016algorithms} for stochastic constraints. Nonconvex optimization problems without constraints or with easy-to-compute projection on the constraint set  have been studied by \cite{ghadimi2013stochastic,zhang2018convergence,lan2019accelerated}. When the function $f$ in problem \eqref{p1} is convex and $h$ is a nonsmooth function, \cite{schmidt2011convergence} showed that even with errors in the computation of the gradient and the proximal operator, the  inexact proximal-gradient method achieves the same convergence rates as the exact counterpart, if the magnitude of the errors is controlled in an appropriate rate. In nonconvex setting, assuming the proximal operator has an exact solution, \cite{ghadimi2016accelerated} obtained a convergence rate of $\mathcal O(1/T)$, using accelerated gradient scheme for deterministic problems and in stochastic regime using increasing sample-size they obtained the same convergence rate. Inspired by these two works, we present accelerated inexact proximal-gradient framework that can solve problems \eqref{p0} and \eqref{p1}.  {In deterministic regime,} \cite{kong2019complexity} analyzed the iteration-complexity of a quadratic penalty accelerated inexact proximal point method for solving linearly constrained nonconvex composite programs with iteration complexity of $ \mathcal {\tilde O}(\epsilon^{-3})$. Inexact proximal-point penalty method introduced by \cite{lin2019inexact} and \cite{li2021rate} can solve nonlinear constraints with complexity of $\mathcal{\tilde O}(\epsilon^{-2.5})$ and $ \mathcal {\tilde O}(\epsilon^{-3})$ {for affine equality constraints} and nonconvex constraints, respectively. {Recently, \cite{li2020augmented} showed complexity result of $\mathcal{\tilde O}(\epsilon^{-2.5})$ for deterministic problems with nonconvex objective and convex constraints with nonlinear functions to achieve $\epsilon$-KKT point.} 
{In stochastic regime,} \cite{boob2019stochastic} has studied functional constrained optimization problems and obtained a non-asymptotic convergence rate of $\mathcal{ O}(\epsilon^{-2})$ for stochastic problems with convex constraints to achieve $\epsilon^2$-KKT point. {In this paper, we obtain the same convergence rate under weaker assumptions. In particular, in contrast to  \cite{boob2019stochastic}, our analysis does not require the objective function to be Lipschitz and  we prove an asymptotic convergence rate result.}
Next, we outline the contributions of our paper. 

\subsection{Contributions}
In this paper, we consider a stochastic nonconvex  optimization problem with convex nonlinear constraints. We propose an inexact proximal accelerated gradient (IPAG) method where at each iteration the projection onto the nonlinear constraints is solved inexactly. By improving the accuracy of the approximate solution of the proximal subproblem (projection step) at an appropriate rate and ensuring feasibility at each iteration combined with a variance reduction technique, we demonstrate a convergence rate of $\mathcal O(1/T)$, where $T$ is the total number of iterations, and the oracle complexity (number of sample gradients) of $\mathcal O(1/\epsilon^2)$ to achieve an $\epsilon$-first-order optimality of problem \eqref{p0}. 
\af{To accomplish this task, first we} analyze the proposed method for the composite optimization problem \eqref{p1} which can be specialized to \eqref{p0} using an indicator function.  
Moreover, 
our proposed method requires weaker assumptions compare to  \cite{boob2019stochastic}.

Next, we state the main definitions and assumptions that we need for the convergence analysis. In Section \ref{conv},  we introduce the IPAG algorithm to solve the composite optimization problem and then in Section \ref{const opt} we show that IPAG method can be customized to solve a nonconvex stochastic optimization problem with nonlinear constraints \eqref{p0}.  Finally, in  section \ref{numer} we present some empirical experiments to show the benefit of our proposed scheme in comparison with a competitive scheme.

\subsection{Assumptions and Definitions}\label{asd}
Let $\rho$ be the error in the calculation of the proximal objective function achieved by $\tilde x$, i.e., 
\begin{align}\label{prox error}
{1\over 2\gamma}\|\tilde x-y\|^2+h(\tilde x)\leq \rho+\min_{x\in \mathbb R^n}\left\{{1\over 2\gamma}\|x-y\|^2+h(x)\right\},
\end{align}
and we call $\tilde x$ a $\rho$-approximate solution to the proximal problem. 
Next, we define $\rho$-subdifferential and then we state a lemma to characterize the elements of the $\rho$-subdifferential of $h$ at $x$.
\begin{definition}[$\rho$-subdifferential]\label{subgrad}
Given a convex function $h(x):\mathbb R^n\to \mathbb R$ and a positive scalar $\rho$, the $\rho$-approximate subdifferential of $h(x)$ at a point $x\in \mathbb R^n$, denoted as $\partial_\rho h(x)$, is
$$\partial_\rho h(x)=\{d\in \mathbb R^n: h(y)\geq h(x)+\langle d,y-x\rangle-\rho\}.$$
Therefore, when $d\in\partial_\rho h(x)$, we say that $d$ is a $\rho$-subgradient of $h(x)$ at point $x$.
\end{definition}
\begin{lemma}\label{error subdef} 
If $\tilde x$ is a \af{$\rho$-approximate} solution to the proximal problem \eqref{prox} in the sense of \eqref{prox error}, then there exists $v$ such that $\|v\|\leq \sqrt{2\gamma\rho}$ and
$$\tfrac{1}{\gamma}\left(y-\tilde x-v\right)\in \partial_{\rho} h(\tilde x).$$
\end{lemma}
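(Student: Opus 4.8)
The plan is to route the subgradient inequality through the \emph{exact} proximal point and then transfer it to $\tilde x$ at the price of a controlled error. First I would let $\bar x\triangleq\mbox{prox}_{\gamma,h}(y)$ denote the exact minimizer of $\psi(u)\triangleq h(u)+\tfrac{1}{2\gamma}\|u-y\|^2$. Since $u\mapsto\tfrac{1}{2\gamma}\|u-y\|^2$ is $\tfrac1\gamma$-strongly convex and $h$ is convex, $\psi$ is $\tfrac1\gamma$-strongly convex, so $\psi(\tilde x)\ge\psi(\bar x)+\tfrac{1}{2\gamma}\|\tilde x-\bar x\|^2$; combining this with the $\rho$-approximation inequality \eqref{prox error}, namely $\psi(\tilde x)\le\psi(\bar x)+\rho$, yields $\tfrac{1}{2\gamma}\|\tilde x-\bar x\|^2\le\rho$, hence $\|\tilde x-\bar x\|\le\sqrt{2\gamma\rho}$.

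Next I would invoke the first-order optimality condition for the (unconstrained, convex) proximal problem \eqref{prox}: $0\in\partial h(\bar x)+\tfrac{1}{\gamma}(\bar x-y)$, so that $d\triangleq\tfrac{1}{\gamma}(y-\bar x)\in\partial h(\bar x)$. The natural candidate is $v\triangleq\bar x-\tilde x$, which by the previous step satisfies $\|v\|\le\sqrt{2\gamma\rho}$ and, by construction, $\tfrac{1}{\gamma}(y-\tilde x-v)=\tfrac{1}{\gamma}(y-\bar x)=d$. It then remains to verify $d\in\partial_\rho h(\tilde x)$, that is, $h(z)\ge h(\tilde x)+\langle d,z-\tilde x\rangle-\rho$ for every $z\in\mathbb R^n$.

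For this final step I would start from the exact subgradient inequality $h(z)\ge h(\bar x)+\langle d,z-\bar x\rangle$ and reduce the claim to establishing $h(\tilde x)-h(\bar x)+\langle d,\bar x-\tilde x\rangle\le\rho$. To bound the left-hand side I would rewrite \eqref{prox error} as $h(\tilde x)-h(\bar x)\le\tfrac{1}{2\gamma}\big(\|\bar x-y\|^2-\|\tilde x-y\|^2\big)+\rho$, expand $\|\bar x-y\|^2-\|\tilde x-y\|^2=\|\bar x-\tilde x\|^2+2\langle\tilde x-y,\bar x-\tilde x\rangle$, substitute $d=\tfrac1\gamma(y-\bar x)$, and collect terms; the two inner-product contributions telescope to $-\tfrac1\gamma\|\bar x-\tilde x\|^2$, leaving $h(\tilde x)-h(\bar x)+\langle d,\bar x-\tilde x\rangle\le\rho-\tfrac{1}{2\gamma}\|\bar x-\tilde x\|^2\le\rho$, which closes the argument.

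There is no deep obstacle here: the proof is a strong-convexity estimate followed by an algebraic identity. The only point demanding care is the bookkeeping in the final display — tracking the inner-product term produced by completing the square in \eqref{prox error} together with the one coming from the definition of $d$, and checking that they cancel \emph{exactly} against $-\tfrac1\gamma\|\bar x-\tilde x\|^2$ rather than leaving a residual of the wrong sign.
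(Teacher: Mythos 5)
Your argument is correct: the strong-convexity bound $\tfrac{1}{2\gamma}\|\tilde x-\bar x\|^2\le\rho$ is valid, the choice $v=\bar x-\tilde x$ makes $\tfrac1\gamma(y-\tilde x-v)=\tfrac1\gamma(y-\bar x)\in\partial h(\bar x)$ by exact optimality of $\bar x$, and the final cancellation does work out — the two inner-product terms combine to $-\tfrac1\gamma\|\bar x-\tilde x\|^2$, leaving $h(\tilde x)-h(\bar x)+\langle d,\bar x-\tilde x\rangle\le\rho-\tfrac{1}{2\gamma}\|\bar x-\tilde x\|^2\le\rho$ (in fact you prove membership in $\partial_{\rho'}h(\tilde x)$ for the slightly smaller $\rho'=\rho-\tfrac{1}{2\gamma}\|\bar x-\tilde x\|^2$). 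Note that the paper itself gives no proof of this lemma; it defers to \cite{schmidt2011convergence}, where the standard argument runs differently: one observes that $\rho$-optimality of $\tilde x$ means $0\in\partial_\rho\bigl(h+\tfrac{1}{2\gamma}\|\cdot-y\|^2\bigr)(\tilde x)$, then invokes the sum rule $\partial_\rho(q+h)(x)\subseteq\bigcup_{\rho_1+\rho_2\le\rho}\bigl(\partial_{\rho_1}q(x)+\partial_{\rho_2}h(x)\bigr)$ together with the explicit description of the $\rho_1$-subdifferential of the quadratic $q(u)=\tfrac{1}{2\gamma}\|u-y\|^2$ as $\{\tfrac1\gamma(u-y+v):\|v\|\le\sqrt{2\gamma\rho_1}\}$. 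Your route is more elementary and self-contained — it needs only exact first-order optimality at $\bar x$ and one algebraic identity, rather than the approximate-subdifferential calculus — at the cost of passing through the exact proximal point $\bar x$, which exists here since the prox objective is strongly convex, so nothing is lost. Either proof is acceptable; yours could be inserted verbatim where the paper currently only cites the reference.
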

Proof of Lemma \ref{error subdef} can be found in \cite{schmidt2011convergence}. Throughout the paper, we exploit the following basic lemma.  
\begin{lemma}\label{3norms}
Given a symmetric positive definite matrix $Q$, we have the following for any $\nu_1,\nu_2,\nu_3$:
\begin{align*}(\nu_2-\nu_1)^TQ(\nu_3-\nu_1)={1\over 2}(\|\nu_2-\nu_1\|^2_Q+\|\nu_3-\nu_1\|^2_Q-\|\nu_2-\nu_3\|^2_Q), \mbox{ where } \|\nu\|_Q \triangleq \sqrt{\nu^TQ\nu}.\end{align*}
\end{lemma}
In our analysis we use the following lemma \cite{ghadimi2016accelerated}. 
\begin{lemma}\label{bound gamma}
Given a positive sequence $\alpha_k$, define $\Gamma_k=1$ for $k=1$ and $\Gamma_k= (1-\alpha_k)\Gamma_{k-1}$ for $k>1$. Suppose a sequence $\{\chi_k\}_k$ satisfies $\chi_k\leq (1-\alpha_k)\chi_{k-1}+\lambda_k$, where $\lambda_k>0$. Then for any $k\geq1$, we have that $\chi_k\leq \Gamma_k\sum_{j=1}^k \gamma_j/\Gamma_j$.  
\end{lemma}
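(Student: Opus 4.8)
The statement is the classical Ghadimi--Lan recursion estimate, and the plan is to reduce the affine recursion to a telescoping sum by normalizing with $\Gamma_k$. First I would note that the definition $\Gamma_k=(1-\alpha_k)\Gamma_{k-1}$ (for $k>1$) is exactly what makes the recursion coefficient collapse after dividing the hypothesis $\chi_k\le(1-\alpha_k)\chi_{k-1}+\lambda_k$ by $\Gamma_k$:
\begin{align*}
\frac{\chi_k}{\Gamma_k}\;\le\;\frac{(1-\alpha_k)\chi_{k-1}}{\Gamma_k}+\frac{\lambda_k}{\Gamma_k}\;=\;\frac{\chi_{k-1}}{\Gamma_{k-1}}+\frac{\lambda_k}{\Gamma_k},\qquad k>1,
\end{align*}
where I used $(1-\alpha_k)/\Gamma_k=1/\Gamma_{k-1}$. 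This is legitimate provided $\Gamma_k>0$ for all $k$, which holds under the (implicit) assumption $\alpha_k\in(0,1]$.

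With this in hand I would either telescope or induct. Telescoping: summing the displayed inequality over $j=2,\dots,k$ gives $\chi_k/\Gamma_k\le \chi_1/\Gamma_1+\sum_{j=2}^k\lambda_j/\Gamma_j$, and since $\Gamma_1=1$ and the hypothesis at $k=1$ yields $\chi_1\le\lambda_1$ (using $\alpha_1=1$, or $\chi_0\le 0$, whichever the setup provides), we get $\chi_k/\Gamma_k\le\sum_{j=1}^k\lambda_j/\Gamma_j$; multiplying by $\Gamma_k>0$ proves the claim. Alternatively, a one-line induction: the base case $k=1$ reads $\chi_1\le\lambda_1=\Gamma_1\lambda_1/\Gamma_1$, and if $\chi_{k-1}\le\Gamma_{k-1}\sum_{j=1}^{k-1}\lambda_j/\Gamma_j$ then, using $1-\alpha_k\ge 0$ to substitute this bound,
\begin{align*}
\chi_k\;\le\;(1-\alpha_k)\Gamma_{k-1}\sum_{j=1}^{k-1}\frac{\lambda_j}{\Gamma_j}+\lambda_k\;=\;\Gamma_k\sum_{j=1}^{k-1}\frac{\lambda_j}{\Gamma_j}+\Gamma_k\frac{\lambda_k}{\Gamma_k}\;=\;\Gamma_k\sum_{j=1}^{k}\frac{\lambda_j}{\Gamma_j}.
\end{align*}

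I do not expect a genuine obstacle here: the only points requiring care are (i) ensuring $\Gamma_k>0$ and $1-\alpha_k\ge 0$, so that the normalization/division is valid and substituting the induction bound for $\chi_{k-1}$ preserves the direction of the inequality, and (ii) pinning down the base inequality $\chi_1\le\lambda_1$, which is immediate once the convention on $\chi_0$ (or $\alpha_1=1$) is fixed. It is also worth reading the right-hand side of the conclusion as $\Gamma_k\sum_{j=1}^k\lambda_j/\Gamma_j$ — the $\gamma_j$ appearing there should be $\lambda_j$, consistent with the hypothesis.
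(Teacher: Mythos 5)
Your proof is correct and is essentially the same telescoping/induction argument as in the cited reference \cite{ghadimi2016accelerated} (the paper itself omits the proof and defers to that source). You are also right that the $\gamma_j$ in the stated conclusion is a typo for $\lambda_j$, and your caveats about needing $\Gamma_k>0$ (i.e., $\alpha_k<1$ for $k\geq 2$) and the convention $\alpha_1=1$ for the base case are exactly the implicit assumptions under which the lemma is used.
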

The following assumptions are made throughout the paper.
\begin{assumption}\label{assump1}  The following statements hold:
\begin{itemize}
\item[(i)] A slater point of problem \eqref{p0} is available, i.e.,  there exists  $x^{\circ}\in \mathbb R^n$ such that $\phi_i(x^{\circ})<0$ for all $i=1,\hdots,m$ and $x^{\circ}\in \mbox{\bf relint}(X)$. 
\item[(ii)] Function $f$ is smooth and  weakly-convex with Lipschitz continuous gradient, i.e. there exists $L,\ell\geq0$ such that 
$-\tfrac{\ell}{2}\|y-x\|^2\leq f(x)-f(y)-\langle \nabla f(x),y-x\rangle \leq \tfrac{L}{2} \|y-x\|^2$.

\item[(iii)] There exists $C>0$ such that $\|\mbox{prox}_{\gamma,h}(y)\|\leq C$ for any $\gamma>0$ and $y\in \mathbb R^n$.

\item[(iv)]   $\mathbb{E}[ \xi_k \mid \mathcal{F}_k] = 0$ holds a.s., where $ \xi_k \triangleq \nabla f(z_k) - \nabla F(z_k,\omega_k)$. Also, there exists $\tau>0$ such that {$\mathbb{E}[\|\bar  \xi_k\|^2\mid \mathcal{F}_k] \leq {\tau^2\over N_k}$} holds a.s.  for all $k$ and  $\mathcal{F}_k \triangleq \sigma\left(\{z_0,\bar \xi_0, z_1,\bar \xi_1 \hdots, z_{k-1},\bar\xi_{k-1}\}\right)$, where $ \bar \xi_k\triangleq \frac{\sum_{j=1}^{N_k} \nabla f(z_k)-\nabla F(z_k,\omega_{j,k})}{N_k}$.
\end{itemize}
\end{assumption}
\af{Note that Assumption 1 is a common assumption in nonconvex and stochastic optimization problems and it holds for many real-world problems such as problem of non-negative principal component analysis and classification problem with nonconvex loss functions \cite{pham2020proxsarah}.}
\section{CONVERGENCE ANALYSIS}\label{conv}
In this section, we propose an inexact-proximal accelerated gradient scheme for solving problem \eqref{p1} assuming that an inexact solution to the proximal subproblem exists through an inner algorithm $\mathcal M$. Later in section \ref{const opt}, we show that how the inexact solution can be calculated at each iteration  for  problem \eqref{p0}. Since problem \eqref{p1} is nonconvex, we demonstrate the rate result in terms of \af{$\|{z-\mbox{prox}_{\lambda h}(z-\lambda\nabla f(z))}\|$} which is a standard termination criterion for solving constrained or composite nonconvex problems \cite{nemirovski1983problem,ghadimi2014mini,ghadimi2016accelerated}. \af{For problem \eqref{p0}, the first-order optimality condition is equivalent to find $z^*$ such that $z^*=\Pi_\Theta(z^*-\lambda\nabla f(z^*))$ for some $\lambda>0$. Hence, we show the convergence result in terms of $\epsilon$-first-order optimality condition for a vector $z$, i.e., $\|z-\Pi_\Theta(z-\lambda\nabla f(z))\|^2\leq \epsilon$.} 

\begin{algorithm}[htbp]
\caption{Inexact-proximal Accelerated Gradient Algorithm (IPAG)}
\label{alg1}
 {\bf input:} \af{$x_0,y_0 \in \mathbb R^n$, positive sequences $\{\alpha_k,\gamma_k,\lambda_k\}_k$ and Algorithm $\mathcal M$ satisfying Assumption \ref{assump2}}; \\
 {\bf for} $k=1\hdots T$ {\bf do}  \\
\mbox{(1)}\quad $z_k =(1-\alpha_k)y_{k-1} +\alpha_kx_{k-1}$;  \\
\mbox{(2)}\quad $x_k\approx \mbox{prox}_{\gamma_{k}h}\left(x_{k-1}-\gamma_k (\nabla f(z_k )+\bar \xi_k)\right)$ (solved inexactly by algorithm $\mathcal M$ with $q_k$ iterations); \\
\mbox{(3)}\quad  $y_k \approx \mbox{prox}_{\lambda_{k}h}\left(z_k -\lambda_k (\nabla f(z_k )+\bar \xi_k)\right)$ (solved inexactly by algorithm $\mathcal M$ with $p_k$ iterations);\\
{\bf end for}\\
{\bf Output:} \af{$z_N$} where $N$ is randomly selected from $\{T/2,\hdots,T\}$ with $\mbox{Prob}\{N=k\}=\frac{1}{\sum_{k=\lfloor T/2\rfloor}^T \tfrac{1-L \lambda_k}{16\lambda_k \Gamma_k}} \left(\tfrac{1-L \lambda_k}{16\lambda_k \Gamma_k}\right)$. 
\end{algorithm}

\af{\begin{assumption}\label{assump2}
For a given $c\in\mathbb R^n$ and $\gamma>0$, consider the problem $\tilde u\triangleq\mbox{prox}_{\gamma h}\left(c\right)$. 
An algorithm $\mathcal M$ with an initial point $ u_0$, output $u$ and convergence rate of $\mathcal O(1/t^2)$ within $t$ steps exists, such that $\|u-\tilde u\|^2\leq (a_1\| u_0-\tilde u\|^2+a_2)/t^2$ for some $a_1,a_2>0$. 
\end{assumption} 
Suppose  the solutions of proximal operators $\tilde x_k\triangleq\mbox{prox}_{\gamma_{k}h}\left(x_{k-1}-\gamma_k (\nabla f(z_k )+\bar \xi_k)\right)$ and $\tilde y_k \triangleq \mbox{prox}_{\lambda_{k}h}(z_k \break -\lambda_k (\nabla f(z_k )+\bar \xi_k))$ are not available exactly, instead an $e_k$-subdifferential solution $x_k$  and $\rho_k$-subdifferential solution $y_k$ are available, respectively. In particular, given $\bar \xi_k$ for the proximal subproblem in step (2) and (3) of Algorithm \ref{alg1} at iteration $k$, 
Assumption \ref{assump2} immediately implies that after $q_k$ and $p_k$ steps of Algorithm $\mathcal M$ with initial point $x_{k-1}$ and $y_{k-1}$, we have $e_k=\gamma_k(c_1\|x_{k-1}-\tilde x_k\|^2+c_2)/q_k^2$ and $\rho_k=\lambda_k(b_1\|y_{k-1} -\tilde y_k \|^2+b_2)/p_k^2$, for some $c_1,c_2,b_1,b_2>0$ where $\gamma_k, \lambda_k$ represents strong convexity of the subproblems, respectively. 
Later, in Section \ref{const opt}, we show the existence of Algorithm $\mathcal{M}$ such that it satisfies Assumption \ref{assump2}.
\begin{remark}
Note that from Assumption \ref{assump1}(iii) and \ref{assump2}, we can show the following for all $k>0$:
\begin{align}\label{bound x}
\nonumber&\|x_k-\tilde x_k\|^2\leq \tfrac{1}{q_k^2}\big[2c_1(\|x_{k-1}-\tilde x_{k-1}\|^2+\|\tilde x_{k-1}-\tilde x_k\|^2)+c_2\big]\leq \|x_{k-1}-\tilde x_{k-1}\|^2+\tfrac{8C^2+c_2}{q_k^2}\\
&\implies \|x_k-\tilde x_k\|^2\leq\|x_0-\tilde x_0\|^2+\sum_{j=1}^k \tfrac{8C^2+c_2}{q_j^2} \implies \|x_k\|\leq C+\sqrt{\|x_0-\tilde x_0\|^2+\tilde C} \triangleq B_1,
\end{align}
where $\tilde C\triangleq \sum_{j=1}^k \tfrac{8C^2+c_2}{q_j^2}$ and we used the fact that  $ \|\tilde x_k\|\leq C$. Similarly for step (3) of Algorithm \ref{alg1}, 
there exist $B_2,B_3>0$ such that the followings hold for all $k>0$,
\begin{align}\label{bound xx}
\|y_k \|\leq B_2, \qquad \|z_k \|\leq B_3. 
\end{align}
\end{remark}}
Next, we state our main lemma that provides a bridge towards driving rate statements.  
\begin{lemma}\label{main lemma}
Consider Algorithm \ref{alg1} and suppose \af{Assumption \ref{assump1} and \ref{assump2} hold} and choose stepsizes $\alpha_k$, $\gamma_k$ and $\lambda_k$ such that $\alpha_k\gamma_k\leq\lambda_k$. Let $\hat y_k\approx \mbox{prox}_{\lambda_{k}h}\left(z_k -\lambda_k \nabla f(z_k)\right)$ \aj{in the sense of \eqref{prox error} and $\hat y_k^r\triangleq \mbox{prox}_{\lambda_{k}h}\left(z_k -\lambda_k \nabla f(z_k)\right)$ for any $k\geq 1$}, then the following holds for all $T>0$.
\begin{align}\label{lem result}
&\nonumber\mathbb E[\| \hat y_N -z_N \|^2+\| \hat y_N^r -z_N \|^2] \\
&\nonumber\quad \leq \left(\sum_{k=\lfloor T/2\rfloor}^T\tfrac{1-L \lambda_k}{16\lambda_k \Gamma_k} \right)^{-1}\Big[\tfrac{\alpha_1}{2\gamma_1 \Gamma_1}\|x_0-x^*\|^2+\tfrac{\ell}{2}\sum_{k=1}^T \tfrac{\alpha_k}{\Gamma_k}\big[2B_3^2+C^2+\alpha_k(1-\alpha_k)(2B_2^2+B_1^2)\big]\\
&\quad +\sum_{k=1}^T \left(\tfrac{\lambda_k\tau^2}{\Gamma_k N_k(1-L\lambda_k)}+\tfrac{2e_k}{\Gamma_k}+\tfrac{B_1^2+C^2}{\gamma_k\Gamma_k}+\tfrac{\rho_k(1+k)}{\Gamma_k}+\tfrac{B_1^2+B_2^2}{k\lambda_k\Gamma_k}+\tfrac{\aj{5\lambda_k}\tau^2(1-L \lambda_k)}{8\Gamma_kN_k}+\tfrac{\rho_k(1-L \lambda_k)}{\lambda_k^2 \Gamma_k}\right)\Big].
\end{align}
\end{lemma}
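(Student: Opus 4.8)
The strategy is to convert the two inexact proximal updates of Algorithm~\ref{alg1} into approximate first-order inequalities, combine them with the weak-convexity and Lipschitz-gradient bounds of Assumption~\ref{assump1}(ii), assemble a one-iteration recursion, weight it by $1/\Gamma_k$ and telescope, and finally rewrite the resulting partial sum as the expectation over the randomized index $N$. For the extraction step: for step~(2), Lemma~\ref{error subdef} yields $v_k$ with $\|v_k\|\le\sqrt{2\gamma_k e_k}$ and $\tfrac{1}{\gamma_k}(x_{k-1}-\gamma_k(\nabla f(z_k)+\bar\xi_k)-x_k-v_k)\in\partial_{e_k}h(x_k)$; plugging $x^*$ into Definition~\ref{subgrad} and rewriting $\langle x_{k-1}-x_k,\,x_k-x^*\rangle$ in squared-norm form via Lemma~\ref{3norms} (with $Q=I$) bounds $h(x^*)-h(x_k)-\langle\nabla f(z_k)+\bar\xi_k,\,x^*-x_k\rangle$ by $\tfrac{1}{2\gamma_k}(\|x_{k-1}-x^*\|^2-\|x_k-x^*\|^2-\|x_k-x_{k-1}\|^2)$ plus $e_k$ and a Cauchy--Schwarz term in $v_k$. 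For step~(3) I repeat this with the $\rho_k$-subgradient at $z_k$ and additionally use the $L$-smoothness upper bound of Assumption~\ref{assump1}(ii) to pass from $f(z_k)+\langle\nabla f(z_k),y_k-z_k\rangle$ to $f(y_k)$, obtaining a descent-type inequality of the schematic form $g(y_k)\le g(z_k)+\langle\bar\xi_k,\,z_k-y_k\rangle-\tfrac{1-L\lambda_k}{2\lambda_k}\|y_k-z_k\|^2+(\text{terms in }\rho_k,v)$.

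Next I would multiply the step-(2) inequality by $\alpha_k$, use $z_k=(1-\alpha_k)y_{k-1}+\alpha_kx_{k-1}$ together with convexity of $h$ and the weak-convexity lower bound $f(z_k)\le f(x^*)+\langle\nabla f(z_k),z_k-x^*\rangle+\tfrac{\ell}{2}\|z_k-x^*\|^2$, so that the $\nabla f(z_k)$ inner products cancel. The hypothesis $\alpha_k\gamma_k\le\lambda_k$ is what makes the $x_k$-recursion compatible with the $y_k$-descent, and Young's inequality applied to $\langle\bar\xi_k,w\rangle$ with $w=z_k-y_k$, with parameters tuned so that the leftover coefficient of $\|y_k-z_k\|^2$ is exactly $\tfrac{1-L\lambda_k}{16\lambda_k}$, gives
\begin{align*}
g(y_k)-g^*\ \le\ (1-\alpha_k)(g(y_{k-1})-g^*)+\tfrac{\alpha_k}{2\gamma_k}(\|x_{k-1}-x^*\|^2-\|x_k-x^*\|^2)-\tfrac{1-L\lambda_k}{16\lambda_k}\|y_k-z_k\|^2+R_k,
\end{align*}
where $R_k$ collects the weak-convexity remainders $\tfrac{\ell\alpha_k}{2}(\dots)$, the noise contributions proportional to $\lambda_k\|\bar\xi_k\|^2$, and the $e_k,\rho_k$ proximal-error contributions (including the exact/inexact gaps $\|x_k-\tilde x_k\|$, $\|y_k-\tilde y_k\|$), every non-telescoping piece of which is bounded using the uniform bounds $\|x_k\|\le B_1$, $\|y_k\|\le B_2$, $\|z_k\|\le B_3$ of the Remark and $\|\mbox{prox}_{\gamma,h}(\cdot)\|\le C$ of Assumption~\ref{assump1}(iii).

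Dividing the recursion by $\Gamma_k$ and using $\Gamma_k=(1-\alpha_k)\Gamma_{k-1}$ turns the first term into $(g(y_{k-1})-g^*)/\Gamma_{k-1}$; summing over $k=1,\dots,T$, dropping the nonnegative terminal term $(g(y_T)-g^*)/\Gamma_T\ge0$, and telescoping the $x^*$-distance terms down to $\tfrac{\alpha_1}{2\gamma_1\Gamma_1}\|x_0-x^*\|^2$ (the telescoping device underlying Lemma~\ref{bound gamma}) leaves $\sum_{k=1}^T\tfrac{1-L\lambda_k}{16\lambda_k\Gamma_k}\|y_k-z_k\|^2$ bounded by the right-hand side of \eqref{lem result} without the hatted terms.

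Finally, since $\hat y_k$ and $\hat y_k^r$ differ from $\tilde y_k\triangleq\mbox{prox}_{\lambda_k h}(z_k-\lambda_k(\nabla f(z_k)+\bar\xi_k))$ only through replacing the stochastic gradient by $\nabla f(z_k)$ and through the inexactness, nonexpansiveness of the proximal map together with Lemma~\ref{error subdef} gives $\|\hat y_k-z_k\|^2+\|\hat y_k^r-z_k\|^2\le c_1\|y_k-z_k\|^2+c_2\lambda_k^2\|\bar\xi_k\|^2+c_3\lambda_k\rho_k$ for absolute constants $c_1,c_2,c_3$; substituting this into the previous display, restricting the sum to $k\in\{\lfloor T/2\rfloor,\dots,T\}$, taking total expectations and using Assumption~\ref{assump1}(iv) to replace each $\|\bar\xi_k\|^2$ by $\tau^2/N_k$, and dividing by $\sum_{k=\lfloor T/2\rfloor}^T\tfrac{1-L\lambda_k}{16\lambda_k\Gamma_k}$ --- which is exactly the normalizing constant in the law of $N$, so that the normalized left-hand side equals $\mathbb E[\|\hat y_N-z_N\|^2+\|\hat y_N^r-z_N\|^2]$ --- yields \eqref{lem result}. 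The main obstacle is the bookkeeping inside the recursion: three independent error sources (the $e_k$- and $\rho_k$-inexactness of the two prox solves and the stochastic-gradient noise $\bar\xi_k$) must be split by Young's inequality with parameters chosen so that the residual coefficient of $\|y_k-z_k\|^2$ matches the law of $N$, while each non-telescoping cross term is simultaneously controlled by the uniform bounds of the Remark and Assumption~\ref{assump1}(iii); checking that the $x^*$-distance terms genuinely telescope (rather than just summing to a bounded quantity) is the other place requiring care.
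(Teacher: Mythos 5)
Your proposal follows essentially the same route as the paper's proof: extract $e_k$- and $\rho_k$-subgradient inequalities via Lemma~\ref{error subdef}, evaluate the $y_k$-inequality at the convex combination $\alpha_k x_k+(1-\alpha_k)y_{k-1}$, combine with the weak-convexity/smoothness bounds and the condition $\alpha_k\gamma_k\le\lambda_k$, apply Young's inequality to the noise term, telescope with the $\Gamma_k$ weights, pass from $\|y_k-z_k\|^2$ to $\|\hat y_k-z_k\|^2+\|\hat y_k^r-z_k\|^2$ using nonexpansiveness/strong convexity of the prox subproblem, and read off the randomized-index expectation. The plan is correct in structure and matches the paper's argument step for step, up to unimportant differences in the constants of the final comparison inequality.
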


\begin{proof}
First of all from the fact that $\nabla f(x)$ is Lipschitz, for any $k\geq1$, the following holds:
\begin{align}\label{lip}
 f(y_k )\leq f({z_k })+\langle \nabla f(z_k ),y_k -z_k \rangle +\tfrac{L}{2}\|y_k -z_k \|^2.\end{align}
Using Assumption \ref{assump1}(ii), for any $\alpha_k\in(0,1)$ one can obtain the following:
\begin{align}\label{bound}
\nonumber&f(z_k )-[(1-\alpha_k)f(y_{k-1} )+{\alpha_k} f(x)]\\
\nonumber&=\alpha_k[f(z_k {)}-f(x)]+(1-\alpha_k)[f({z_k )-f(y_{k-1} )}]\\ \nonumber&
\leq \alpha_k[\langle \nabla f(z_k ),z_k -x\rangle+\tfrac{{\ell}}{2}\|z_k -x\|^2]+(1-\alpha_k)[\langle \nabla f(z_k ),{z_k }-y_{k-1} \rangle+\tfrac{{\ell}}{2}\|x_u -y_{k-1} \|^2]\\ \nonumber&
= \langle \nabla f(z_k ),z_k -\alpha_kx-(1-\alpha_k)y_{k-1} \rangle+\tfrac{\ell\alpha_k}{2}\|z_k -x\|^2+\tfrac{\ell(1-\alpha_k)}{2}\|z_k -y_{k-1} \|^2\\ &\leq
\langle \nabla f(z_k ),z_k -\alpha_kx-(1-\alpha_k)y_{k-1} \rangle+\tfrac{\ell\alpha_k}{2}\|z_k -x\|^2+\tfrac{\ell\alpha_k^2(1-\alpha_k)}{2}\|{y_{k-1} }-x_{k-1}\|^2,
\end{align}
 where in the last inequality we used the fact that $z_k -y_{k-1} =\alpha_k(x_{k-1}-y_{k-1} )$.
From Lemma \ref{error subdef}, if $e_k$ be the error in the proximal map of update $x_k$ in Algorithm \ref{alg1}  there exists $v_k$ such that $\|v_k\|\leq \sqrt{2\gamma_k e_k}$ and $\tfrac{1}{\gamma_k}\left(x_{k-1}-x_k-\gamma_k(\nabla f(z_k )+\bar \xi_k)-v_k\right)\in {\partial}_{e_k} h(x_k)$. Therefore, from Definition \ref{subgrad}, the following holds: 
\begin{align*}
h(x)&\geq h(x_k)+\langle \tfrac{1}{\gamma_k}(x_{k-1}-x_k)-\nabla f(z_k )-\bar \xi_k-\tfrac{1}{\gamma_k}v_k, x-x_k\rangle-e_k\\
&\Longrightarrow \langle \nabla f(z_k )+\bar \xi_k,x_k-x\rangle + h(x_k)\leq h(x)-\tfrac{1}{\gamma_k}\langle v_k,x_k-x \rangle+e_k+\tfrac{1}{\gamma_k}\langle x_{k-1}-x_k,x_k-x\rangle{.}
\end{align*}
From Lemma \ref{3norms}, we have that $\tfrac{1}{\gamma_k}\langle x_{k-1}-x_k,x_k-x\rangle=\tfrac{1}{2\gamma_k}[\|x_{k-1}-x\|^2-\|x_k-x_{k-1}\|^2-\|{x_k}-x\|^2]$, therefore,
\begin{align}\label{bound1}
\nonumber&\langle \nabla f(z_k )+\bar \xi_k,x_k-{x} \rangle+h(x_k)\\ &\quad\leq h(x)-\tfrac{1}{\gamma_k}\langle v_k,x_k-x\rangle+e_k+\tfrac{1}{2\gamma_k}[\|x_{k-1}-x\|^2-\|{x_k-x_{k-1}}\|^2-\|{x_k}-x\|^2].
\end{align}
Similarly if $\rho_k$ be the error of computing the proximal map of update $y_k $ in Algorithm \ref{alg1}, then there exists $w_k$ such that  $\|w_k\|\leq \sqrt{2\lambda_k \rho_k}$ and one can obtain the following:
\begin{align}\label{b3}
\nonumber&\langle\nabla f(z_k )+\bar \xi_k,y_k -x\rangle+h(y_k )\\
&\quad\leq h(x)-\tfrac{1}{\lambda_k}\langle w_k,y_k -x\rangle+\rho_k +\tfrac{1}{2\lambda_k}[\|z_k -x\|^2-\|{y_k }-z_k \|^2-\|{y_k -x}\|^2].
\end{align}
Letting $x=\alpha_k x_k + (1-\alpha_k)y_{k-1} $ in \eqref{b3} for any $\alpha_k\geq 0$, the following holds:
\begin{align*}
&\langle\nabla f(z_k )+ \bar \xi_k,y_k -\alpha_k x_k-(1-\alpha_k)y_{k-1} \rangle+h(y_k )\\
&\quad \leq h(\alpha_k x_k+(1-\alpha_k)y_{k-1} )-\tfrac{1}{\lambda_k}\langle w_k,y_k -\alpha_k x_k-(1-\alpha_k)y_{k-1} \rangle+\rho_k\\&\qquad+\tfrac{1}{2\lambda_k}[\|{z_k }-\alpha_kx_k-(1-\alpha_k)y_{k-1} \|^2-\|y_k -z_k \|^2].
\end{align*}
From convexity of $h$ and step (1) of algorithm \ref{alg1} we obtain:
\begin{align}\label{bound2}
\nonumber &\langle\nabla f (z_k )+\bar \xi_k,{y_k }-\alpha_kx_k-(1-\alpha_k)y_{k-1} \rangle+h(y_k )\\
\nonumber &\quad \leq \alpha_k h(x_k)+(1-\alpha_k)h(y_{k-1} {)}-\tfrac{1}{\lambda_k}\langle w_k,y_k -\alpha_kx_k-(1-\alpha_k)y_{k-1} \rangle +\rho_k\\
&\qquad+\tfrac{1}{2\lambda_k}[\alpha_k^2\|x_k-x_{k-1}\|^2-\|y_k -z_k \|^2.
\end{align}
Multiplying \eqref{bound1} by $\alpha_k$ and then sum it up with \eqref{bound2} gives us the following
\begin{align}\label{sum}
\nonumber&\langle \nabla f(z_k )+\bar \xi_k,y_k -\alpha_kx-(1-\alpha_k){y_{k-1} }\rangle+h(y_k )\\
\nonumber&\quad  \leq(1-\alpha_k)h(y_{k-1} )+\alpha_k h(x)-\tfrac{\alpha_k}{2\gamma_k}[\|x_{k-1}- x\|^2-\|{x_k}-x\|^2]-\tfrac{1}{\gamma_k}\langle v_k,x_k-x\rangle\\
&\qquad+e_k+\underbrace{\tfrac{\alpha_k(\gamma_k\alpha_k-\lambda_k)}{2\gamma_k\lambda_k}}_{\text{term (a)}}\|x_k-x_{k-1}\|^2-\tfrac{1}{2\lambda_k}\|y_k -z_k \|^2-\tfrac{1}{\lambda_k}\langle w_k,{y_k }-\alpha_kx_k-(1-\alpha_k)y_{k-1} \rangle+\rho_k.
\end{align}
By choosing $\gamma_k$ such that $\alpha_k\gamma_k\leq\lambda_k$, one can easily confirm that term (a)$\leq 0$. Now combining \eqref{lip}, \eqref{bound} and \eqref{sum} and using the facts that $g(x)=f(x)+h(x)$ and $z_k =y_{k-1} +\alpha_k(x_{k-1}-y_{k-1} )$, we get the following:
\begin{align}\label{bound g}
\nonumber g(y_k )&\leq(1-\alpha_k)g(y_{k-1} )+\alpha_kg(x)-\tfrac{1}{2}(\tfrac{1}{\lambda_k}-L)\|y_k -z_k \|^2+\overbrace{\langle\bar \xi_k,\alpha_k(x-x_{k-1})+z_k -y_k \rangle}^{\text{term (b)}}\\ \nonumber&\quad+\tfrac{\alpha_k}{2\gamma_k}[\|{x_{k-1}-x}\|^2-\|x_k-x\|^2]+\tfrac{{\ell\alpha_k}}{2}\|x_{md}-x\|^2+\tfrac{{\ell\alpha_k^2}(1-\alpha_k)}{2}\|y_{k-1} -x_{k-1}\|^2\\
&\quad-\tfrac{1}{\gamma_k}\langle v_k,x_k-x\rangle+e_k-\tfrac{1}{\lambda_k}\langle w_k,{y_k }-\alpha_kx_k-(1-\alpha_k)y_{k-1} \rangle+\rho_k.
\end{align}
Moreover one can bound term (b) as follows using the Young's inequality.
\begin{align}\label{termb}
\nonumber \langle\bar \xi_k,\alpha_k(x-x_{k-1})+z_k -y_k \rangle&=\langle\bar \xi_k,\alpha_k(x-x_{k-1})\rangle+\langle\bar \xi_k,z_k -y_k \rangle\\
&\leq \langle\bar \xi_k,\alpha_k(x-x_{k-1})\rangle+\tfrac{\lambda_k}{1-L\lambda_k}\|z_k -y_k \|^2+\tfrac{1-L\lambda_k}{4\lambda_k}\|\bar \xi_k\|^2.
\end{align}
Using \eqref{termb} in \eqref{bound g}, we get the following.
\begin{align*}
 g(y_k )&\leq(1-\alpha_k)g(y_{k-1} )+\alpha_kg(x)-\tfrac{1}{4}(\tfrac{1}{\lambda_k}-L)\|y_k -z_k \|^2+\langle\bar \xi_k,\alpha_k(x-x_{k-1})\rangle+\tfrac{\lambda_k}{1-L\lambda_k}\|\bar \xi_k\|^2\\ &\quad+\tfrac{\alpha_k}{2\gamma_k}[\|{x_{k-1}-x}\|^2-\|x_k-x\|^2]+\tfrac{{\ell\alpha_k}}{2}\|x_{md}-x\|^2+\tfrac{{\ell\alpha_k^2}(1-\alpha_k)}{2}\|y_{k-1} -x_{k-1}\|^2\\
&\quad-\tfrac{1}{\gamma_k}\langle v_k,x_k-x\rangle+e_k-\tfrac{1}{\lambda_k}\langle w_k,{y_k }-\alpha_kx_k-(1-\alpha_k)y_{k-1} \rangle+\rho_k.
\end{align*}
Subtract $g(x)$ from both sides, using lemma \ref{bound gamma}, assuming ${\tfrac{\alpha_k}{\lambda_k\Gamma_k}}$ is a non-decreasing sequence and summing over $k$ from $k=1$ to T, the following can be obtained. 

\begin{align*}
&\tfrac {g(x_T )-g(x)}{\Gamma_T}+\sum_{k=1}^T \tfrac{1-L \lambda_k}{4\lambda_k \Gamma_k}\|y_k -z_k \|^2\\
&\quad \leq \tfrac{\alpha_1}{2\gamma_1 \Gamma_1}\|x_0-x\|^2-\tfrac{\alpha_{T+1}}{2\gamma_{T+1}\Gamma_{T+1}}\|x_T-x\|^2+\tfrac{\ell}{2}\sum_{k=1}^T \tfrac{\alpha_k}{\Gamma_k}\big[\|{z_k }-x\|^2+\alpha_k(1-\alpha_k)\|{y_{k-1} }-x_{k-1}\|^2\big]\\
&\qquad+\sum_{k=1}^T\tfrac{\alpha_k}{\Gamma_k} \langle\bar \xi_k,x-x_{k-1}\rangle+\sum_{k=1}^T\tfrac{\lambda_k}{\Gamma_k(1-L\lambda_k)}\|\bar \xi_k\|^2\\
&\qquad-\sum_{k=1}^T\big[\tfrac{1}{\gamma_k\Gamma_k}\langle v_k, x_k-x\rangle+\tfrac{e_k}{\Gamma_k}-\tfrac{1}{\lambda_k\Gamma_k}\langle w_k,{y_k }-\alpha_kx_k-(1-\alpha_k){y_{k-1} }\rangle+\tfrac{\rho_k}{\Gamma_k}\big].
\end{align*}
Letting $x=x^*$ and using Assumption \ref{assump1}(iii), inequalities \eqref{bound x} and \eqref{bound xx}  and the fact that $\|v_k\|\leq \sqrt{2\gamma_k e_k}$ and $\|w_k\|\leq \sqrt{2\lambda_k \rho_k}$, we can simplify the above inequality as follows:
\begin{align*}
\tfrac {g(x_T )-g(x^*)}{\Gamma_T}+\sum_{k=1}^T \tfrac{1-L \lambda_k}{4\lambda_k \Gamma_k}\|y_k -z_k \|^2 &\leq \tfrac{\alpha_1}{2\gamma_1 \Gamma_1}\|x_0-x^*\|^2+\tfrac{\ell}{2}\sum_{k=1}^T \tfrac{\alpha_k}{\Gamma_k}\big[2B_3^2+C^2+\alpha_k(1-\alpha_k)(2B_2^2+B_1^2)\big]\\
&+\sum_{k=1}^T\tfrac{\alpha_k}{\Gamma_k} \langle\bar \xi_k,x^*-x_{k-1}\rangle+\sum_{k=1}^T\tfrac{\lambda_k}{\Gamma_k(1-L\lambda_k)}\|\bar \xi_k\|^2\\
&+\sum_{k=1}^T \big(\tfrac{2e_k}{\Gamma_k}+\tfrac{B_1^2+C^2}{\gamma_k\Gamma_k}+\tfrac{\rho_k(1+k)}{\Gamma_k}+\tfrac{B_1^2+B_2^2}{k\lambda_k\Gamma_k}\big).
\end{align*}
Using the fact that $g(x_T )-g(x^*)\geq 0$, taking conditional expectation from both sides and applying Assumption \ref{assump1}(iv) on the conditional first and second moments, we get the following.
\begin{align}\label{exp}
\nonumber\sum_{k=1}^T \tfrac{1-L \lambda_k}{4\lambda_k \Gamma_k}\mathbb E[\| y_k -z_k \|^2\mid \mathcal F_k] &\leq \tfrac{\alpha_1}{2\gamma_1 \Gamma_1}\|x_0-x^*\|^2+\tfrac{\ell}{2}\sum_{k=1}^T \tfrac{\alpha_k}{\Gamma_k}\big[2B_3^2+C^2+\alpha_k(1-\alpha_k)(2B_2^2+B_1^2)\big]\\
 &+\sum_{k=1}^T\tfrac{\lambda_k\tau^2}{\Gamma_k N_k(1-L\lambda_k)}+\sum_{k=1}^T \big(\tfrac{2e_k}{\Gamma_k}+\tfrac{B_1^2+C^2}{\gamma_k\Gamma_k}+\tfrac{\rho_k(1+k)}{\Gamma_k}+\tfrac{B_1^2+B_2^2}{k\lambda_k\Gamma_k}\big).
\end{align}
\aj{To bound the left-hand side we use the following inequality by defining $y^r_k\triangleq \mbox{prox}_{\lambda_{k}h}\left(z_k -\lambda_k (\nabla f(z_k)+\bar \xi_k)\right)$ and $\hat y^r_k\triangleq \mbox{prox}_{\lambda_{k}h}\left(z_k -\lambda_k \nabla f(z_k)\right)$.  
\begin{align*}
\|y_k -z_k \|^2&={1\over 2}\|y_k -z_k \|^2+{1\over 2}\|y_k -z_k \|^2\\
&\geq {1\over 4}\|\hat y_k -z_k \|^2-{1\over 2}\|\hat y_k -y_k \|^2+{1\over 4}\|\hat y_k^r -z_k \|^2-{1\over 2}\|\hat y_k^r -y_k \|^2\\
&\geq {1\over 4}\|\hat y_k -z_k \|^2+{1\over 4}\|\hat y_k^r -z_k \|^2-{3\over 2}\|\hat y_k -\hat y_k^r \|^2-{5\over 2}\|\hat y_k^r -y_k^r \|^2-{5\over 2}\| y_k^r -y_k \|^2,
\end{align*}
where we used the fact that for any $a,b\in \mathbb R$, we have that $(a-b)^2\geq\tfrac{1}{2}a^2-b^2$ and for any $a_i\in \mathbb R$, $(\sum_{i=1}^ma_i)^2\leq m\sum_{i=1}^ma_i^2$. 
From Assumption \ref{assump1}(iv), we know that $\|\hat y^r_k -y^r_k \|^2\leq \lambda_k^2\tau^2/N_k$, also  we know that $\|\hat y_k -\hat y^r_k \|^2\leq \rho_k/\lambda_k$ and similarly $\| y_k - y^r_k \|^2\leq  \rho_k/\lambda_k$. Therefore, one can conclude that $\|y_k -z_k \|^2\geq \tfrac{1}{4}\|\hat y_k -z_k \|^2+\tfrac{1}{4}\|\hat y_k^r -z_k \|^2-\tfrac{5}{2}\lambda_k^2\tau^2/N_k-4\rho_k/\lambda_k$}. Hence, by taking another expectation from \eqref{exp} and then using this bound, the following can be obtained. 
\begin{align*}
&\nonumber\sum_{k=1}^T \tfrac{1-L \lambda_k}{16\lambda_k \Gamma_k}\mathbb E[\|\hat y_k -z_k \|^2+\|\hat y_k^r -z_k \|^2] \\
&\quad \leq \tfrac{\alpha_1}{2\gamma_1 \Gamma_1}\|x_0-x^*\|^2+\tfrac{\ell}{2}\sum_{k=1}^T \tfrac{\alpha_k}{\Gamma_k}\big[2B_3^2+C^2+\alpha_k(1-\alpha_k)(2B_2^2+B_1^2)\big]\\
&\quad +\sum_{k=1}^T \left(\tfrac{\lambda_k\tau^2}{\Gamma_k N_k(1-L\lambda_k)}+\tfrac{2e_k}{\Gamma_k}+\tfrac{B_1^2+C^2}{\gamma_k\Gamma_k}+\tfrac{\rho_k(1+k)}{\Gamma_k}+\tfrac{B_1^2+B_2^2}{k\lambda_k\Gamma_k}+\tfrac{\aj{5\lambda_k}\tau^2(1-L \lambda_k)}{8\Gamma_kN_k}+\tfrac{\rho_k(1-L \lambda_k)}{\lambda_k^2 \Gamma_k}\right).
\end{align*}
Using the fact that $\sum_{k=\lfloor T/2\rfloor}^T A_t\leq \sum_{k=1}^T A_t$ where $A_t=\tfrac{1-L \lambda_k}{16\lambda_k \Gamma_k}\mathbb E[\|\hat y_k -z_k \|^2+\|\hat y_k^r -z_k \|^2]$, dividing both side by $\sum_{k=\lfloor T/2\rfloor}^T \tfrac{1-L \lambda_k}{16\lambda_k \Gamma_k}$ and using definition of $N$ in Algorithm \ref{alg1}, the desired result can be obtained. 
\end{proof}
We are now ready to prove our main rate results. 
\begin{theorem}\label{th1}
Let $\{y_k ,x_k,z_k \}$ generated by Algorithm \ref{alg1} such that at each iteration $k\geq 1$, \aj{$e_k$-approximate solution of step (2) and $\rho_k$-approximate solution of step (3) are available through an inner algorithm $\mathcal M$}. Suppose \af{Assumption \ref{assump1} and \ref{assump2} hold} and we select the parameters in Algorithm \ref{alg1} as $\alpha_k=\tfrac{2}{k+1}$, $\gamma_k=\tfrac{k}{4L}$, $\lambda_k=\tfrac{1}{2L}$, $\Gamma_k=\tfrac{2}{k(k+1)}$ and $N_k=k+1$. Then for $B=B_1^2+B_2^2+B_3^2+C^2$, the following holds for all $T>0$.
\begin{align}\label{th result}
\mathbb E[\|\hat y_N -z_N \|^2+\|\hat y_N^r -z_N \|^2]\leq \tfrac{128}{LT^3}\left[ 2BT(T+1)\left(\tfrac{\ell}{4}+\tfrac{13\tau^2}{64LB}+4L\right)+\sum_{k=1}^T \left(\tfrac{2e_k}{\Gamma_k}+\tfrac{\rho_k(1+k)}{\Gamma_k}+\aj{\tfrac{4L^2\rho_k}{\Gamma_k}}\right)\right],
\end{align}
where $\hat y_k\approx \mbox{prox}_{\lambda_{k}h}\left(z_k -\lambda_k \nabla f(z_k)\right)$ \aj{in the sense of \eqref{prox error}, and $\hat y_k^r= \mbox{prox}_{\lambda_{k}h}\left(z_k -\lambda_k \nabla f(z_k)\right)$ for any $k\geq 1$}. 
\end{theorem}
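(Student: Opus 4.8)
The plan is to obtain Theorem \ref{th1} purely as a specialization of Lemma \ref{main lemma}: once the prescribed sequences are substituted into \eqref{lem result}, every quantity on the right-hand side becomes an explicit function of $k,T,L,\ell,\tau,B,e_k,\rho_k$, and what survives is \eqref{th result}. So the first step is a bookkeeping check that the choices $\alpha_k=\tfrac2{k+1}$, $\gamma_k=\tfrac k{4L}$, $\lambda_k=\tfrac1{2L}$, $\Gamma_k=\tfrac2{k(k+1)}$, $N_k=k+1$ are admissible for Lemma \ref{main lemma} and its proof: $\Gamma_1=1$ and $\Gamma_k=(1-\alpha_k)\Gamma_{k-1}$ hold; $\alpha_k\gamma_k=\tfrac{k}{2L(k+1)}\le\tfrac1{2L}=\lambda_k$, so ``term (a)'' in that proof is nonpositive; $1-L\lambda_k=\tfrac12>0$; and $\tfrac{\alpha_k}{\lambda_k\Gamma_k}=2Lk$ is nondecreasing, which is the monotonicity used when telescoping.

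Next I would record the elementary identities that recur everywhere: $\tfrac{\alpha_k}{\Gamma_k}=k$, $\Gamma_kN_k=\tfrac2k$, $\gamma_k\Gamma_k=\tfrac1{2L(k+1)}$, $k\lambda_k\Gamma_k=\tfrac1{L(k+1)}$, $\lambda_k^2\Gamma_k=\tfrac1{2L^2k(k+1)}$ and $\tfrac{1-L\lambda_k}{16\lambda_k\Gamma_k}=\tfrac{Lk(k+1)}{32}$. Substituting these into \eqref{lem result}, bounding each of $B_1^2,B_2^2,B_3^2,C^2$ (and $\|x_0-x^*\|^2\le 2B_1^2+2C^2$) by $B$, and using $\alpha_k(1-\alpha_k)\le 1$, the non-$e_k,\rho_k$ part collapses to a multiple of $B\,T(T+1)$ via $\sum_{k=1}^T k=\tfrac{T(T+1)}2$ and $\sum_{k=1}^T(k+1)\le 2\sum_{k=1}^T k$: the weak-convexity (curvature) term contributes $\tfrac\ell2 B\,T(T+1)$; the radius terms $\sum\tfrac{B_1^2+C^2}{\gamma_k\Gamma_k}$, $\sum\tfrac{B_1^2+B_2^2}{k\lambda_k\Gamma_k}$ together with the initialization term contribute $\mathcal{O}(L\,B\,T(T+1))$; and the two noise terms $\sum\tfrac{\lambda_k\tau^2}{\Gamma_kN_k(1-L\lambda_k)}=\tfrac{\tau^2}{4L}T(T+1)$ and $\sum\tfrac{5\lambda_k\tau^2(1-L\lambda_k)}{8\Gamma_kN_k}=\tfrac{5\tau^2}{128L}T(T+1)$ add to at most $\tfrac{13\tau^2}{32L}T(T+1)$. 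Factoring out $2B\,T(T+1)$ then produces exactly $\tfrac\ell4+\tfrac{13\tau^2}{64LB}+4L$. The $e_k,\rho_k$ contributions — namely $\tfrac{2e_k}{\Gamma_k}$, $\tfrac{\rho_k(1+k)}{\Gamma_k}$ and $\tfrac{\rho_k(1-L\lambda_k)}{\lambda_k^2\Gamma_k}=\tfrac{2L^2\rho_k}{\Gamma_k}$ — are kept as the explicit sum appearing in \eqref{th result} (with the last constant relaxed to $4L^2$), since the concrete error schedule is fixed only later.

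For the prefactor, I would lower bound $\sum_{k=\lfloor T/2\rfloor}^T\tfrac{1-L\lambda_k}{16\lambda_k\Gamma_k}=\tfrac L{32}\sum_{k=\lfloor T/2\rfloor}^T k(k+1)\ge\tfrac L{32}\int_{T/2}^T x^2\,dx=\tfrac{7LT^3}{768}\ge\tfrac{LT^3}{128}$, so its reciprocal is at most $\tfrac{128}{LT^3}$; multiplying the collected bracket by this factor gives \eqref{th result} directly, because Lemma \ref{main lemma} already states its bound in terms of $\mathbb E[\|\hat y_N-z_N\|^2+\|\hat y_N^r-z_N\|^2]$ with $N$ drawn from the distribution in Algorithm \ref{alg1}, so no further randomization argument is needed. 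The only genuinely delicate part is the constant bookkeeping: after the various $k+1\le 2k$ relaxations and the $B$-domination of the four radii one must check that the residual constants still fit under $\tfrac\ell4$, $\tfrac{13\tau^2}{64LB}$ and $4L$, and that the $\sum k(k+1)$ lower bound is taken tightly enough (at least $\lceil T/2\rceil$ terms, each at least $(\lfloor T/2\rfloor)^2$) to yield the clean $\tfrac{128}{LT^3}$ factor; the rest is mechanical substitution into Lemma \ref{main lemma}.
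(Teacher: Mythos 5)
Your proposal is correct and follows essentially the same route as the paper's own proof: specialize Lemma \ref{main lemma} with the prescribed parameters, evaluate the resulting sums ($\sum_k \alpha_k/\Gamma_k$, $\sum_k 1/(\gamma_k\Gamma_k)$, etc.), absorb the radius and noise terms into multiples of $BT(T+1)$, and lower-bound $\sum_{k=\lfloor T/2\rfloor}^T \tfrac{1-L\lambda_k}{16\lambda_k\Gamma_k}$ by $\tfrac{LT^3}{128}$. The only cosmetic difference is that you bound that last sum by an integral while the paper evaluates it in closed form; your extra verification of the admissibility conditions ($\alpha_k\gamma_k\le\lambda_k$, monotonicity of $\alpha_k/(\lambda_k\Gamma_k)$) is a welcome addition but not a different argument.
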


\begin{proof}
Using the definition of $\lambda_k$ and $\Gamma_k$, we get the following. 
\begin{align}\label{sum1}
\sum_{k=\lfloor T/2\rfloor}^T \tfrac{1-L \lambda_k}{16\lambda_k \Gamma_k}=\sum_{k=\lfloor T/2\rfloor}^T \tfrac{Lk(k+1)}{32}=\tfrac{L}{32}\left[\tfrac{7T^3}{24}+T^2+\tfrac{5T}{6}\right]\geq \tfrac{LT^3}{128}.
\end{align}
\af{Next, using the definition of parameters specified in the statement of the theorem we have that}
\begin{align}\label{sum2}
\nonumber&\sum_{k=1}^T \tfrac{\alpha_k}{\Gamma_k}=\sum_{k=1}^T k=\tfrac{T(T+1)}{2}\af{,}\qquad\qquad\qquad\qquad \sum_{k=1}^T \tfrac{\tau^2}{\Gamma_kN_k}=\sum_{k=1}^T \tfrac{\tau^2k}{2}=\tfrac{\tau^2T(1+T)}{4}\af{,}\\
&\sum_{k=1}^T \tfrac{1}{\gamma_k\Gamma_k}=\sum_{k=1}^T2L(k+1)=2LT(T+3)\af{,}\qquad \sum_{k=1}^T\tfrac{1}{k\lambda_k\Gamma_k}=\sum_{k=1}^TL(k+1)=LT(T+3).
\end{align} 
Using \eqref{sum1} and \eqref{sum2} in \eqref{lem result} \af{and} the fact that $\alpha_k(1-\alpha_k)\leq 1$, $\tfrac{T+3}{T+1}\leq 2$ and defining $B=B_1^2+B_2^2+B_3^2+C^2$ we get the desired result.
\end{proof}

\begin{corollary}\label{cor1}
Let $\{y_k ,x_k,z_k \}$ \aj{be} generated by Algorithm \ref{alg1} such that at each iteration $k\geq 1$, \aj{$e_k$-approximate solution of step (2) and $\rho_k$-approximate solution of step (3) are calculated} by an inner algorithm $\mathcal M$ \aj{where} $e_k=\gamma_k(c_1\|x_{k-1}-\tilde x_k\|^2+c_2)/q_k^2$ and $\rho_k=\lambda_k(b_1\|y_{k-1} -\tilde y_k \|^2+b_2)/p_k^2$. Suppose \af{Assumptions \ref{assump1} and \ref{assump2} hold} and $p_k=k+1$ and $q_k=k$. If we choose the stepsize parameters as in Theorem \ref{th1}, then the following holds for all \aj{$T\geq 1$}.
\begin{align}\label{conv rate}
&\mathbb E[\|\hat y_N -z_N \|^2+\|\hat y_N^r -z_N \|^2]\leq\af{ \tfrac{D_1}{T}+\tfrac{D_2}{T^2}},\\ 
\nonumber & D_1\triangleq\tfrac{128}{L}\left[4B\left(\tfrac{\ell}{4}+\tfrac{13\tau^2}{64LB}+4L\right)+\left(\tfrac{c_1(2B_1^2+C^2)+c_2}{L}\right)+\left(\tfrac{b_1(2B_2^2+C^2)+b_2}{4L}\right)\right], \\& \nonumber D_2\triangleq{128}\left({b_1(2B_2^2+C^2)+b_2}\right),
\end{align}
where $\hat y_k\approx \mbox{prox}_{\lambda_{k}h}\left(z_k -\lambda_k \nabla f(z_k)\right)$ \aj{in the sense of \eqref{prox error}  and $\hat y_k^r= \mbox{prox}_{\lambda_{k}h}\left(z_k -\lambda_k \nabla f(z_k)\right)$ for any $k\geq 1$}. 
The oracle complexity (number of gradient samples) to achieve $\mathbb E[\|\hat y_N -z_N \|^2+\|\hat y_N^r -z_N \|^2]\leq \epsilon$ is $\mathcal O(1/\epsilon^2)$. 
\end{corollary}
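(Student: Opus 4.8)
The plan is to specialize the bound \eqref{th result} of Theorem~\ref{th1} to the particular inner-algorithm accuracies $e_k=\gamma_k(c_1\|x_{k-1}-\tilde x_k\|^2+c_2)/q_k^2$ and $\rho_k=\lambda_k(b_1\|y_{k-1}-\tilde y_k\|^2+b_2)/p_k^2$ with $q_k=k$ and $p_k=k+1$, and then to carry out the resulting summations. First I would replace the squared differences $\|x_{k-1}-\tilde x_k\|^2$ and $\|y_{k-1}-\tilde y_k\|^2$ by $k$-independent constants: since $\|\tilde x_k\|=\|\mbox{prox}_{\gamma_k h}(\cdot)\|\le C$ and $\|\tilde y_k\|\le C$ by Assumption~\ref{assump1}(iii), while $\|x_{k-1}\|\le B_1$ and $\|y_{k-1}\|\le B_2$ by the uniform iterate bounds \eqref{bound x}--\eqref{bound xx} (which are finite precisely because $q_k=k$ makes $\sum_j 1/q_j^2<\infty$), the triangle inequality gives $\|x_{k-1}-\tilde x_k\|^2\le 2B_1^2+2C^2$ and $\|y_{k-1}-\tilde y_k\|^2\le 2B_2^2+2C^2$. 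Hence $e_k\le \gamma_k\big(c_1(2B_1^2+2C^2)+c_2\big)/k^2$ and $\rho_k\le \lambda_k\big(b_1(2B_2^2+2C^2)+b_2\big)/(k+1)^2$.

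Next I would plug in $\gamma_k=k/(4L)$, $\lambda_k=1/(2L)$, $\Gamma_k=2/(k(k+1))$ and evaluate the three error-dependent sums in \eqref{th result}. For the first, $\tfrac{2e_k}{\Gamma_k}=e_k\,k(k+1)\le \tfrac{c_1(2B_1^2+2C^2)+c_2}{4L}(k+1)$, so $\sum_{k=1}^T \tfrac{2e_k}{\Gamma_k}=\mathcal O(T^2)$; likewise $\tfrac{\rho_k(1+k)}{\Gamma_k}=\rho_k\tfrac{k(k+1)^2}{2}\le \tfrac{b_1(2B_2^2+2C^2)+b_2}{4L}\,k$, so $\sum_{k=1}^T\tfrac{\rho_k(1+k)}{\Gamma_k}=\mathcal O(T^2)$ as well. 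The crucial observation is that the last term behaves differently: $\tfrac{4L^2\rho_k}{\Gamma_k}=2L^2\rho_k k(k+1)\le L\big(b_1(2B_2^2+2C^2)+b_2\big)\tfrac{k}{k+1}\le L\big(b_1(2B_2^2+2C^2)+b_2\big)$, which is $\mathcal O(1)$ in $k$, so $\sum_{k=1}^T\tfrac{4L^2\rho_k}{\Gamma_k}=\mathcal O(T)$ only. Combining with the $2BT(T+1)(\cdot)$ term of \eqref{th result}, which is $\mathcal O(T^2)$, and multiplying through by the prefactor $\tfrac{128}{LT^3}$, the $\mathcal O(T^2)$ contributions produce the $D_1/T$ term and the single $\mathcal O(T)$ contribution produces the $D_2/T^2$ term; collecting the constants yields the stated $D_1$ and $D_2$.

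Finally, for the oracle complexity: at iteration $k$ Algorithm~\ref{alg1} draws $N_k=k+1$ stochastic gradients (the same batch $\bar\xi_k$ is reused in steps (2) and (3), and the inner solver $\mathcal M$ queries only $h$, hence no stochastic gradients of $f$), so after $T$ iterations the total is $\sum_{k=1}^T (k+1)=\mathcal O(T^2)$; since the dominant term in \eqref{conv rate} is $D_1/T$, reaching accuracy $\epsilon$ requires $T=\mathcal O(1/\epsilon)$, whence sample complexity $\mathcal O(1/\epsilon^2)$. I expect the only real work to be the bookkeeping of which of the many error terms inherited from Lemma~\ref{main lemma} survive at order $T^2$ versus order $T$; in particular, isolating the $\rho_k(1-L\lambda_k)/(\lambda_k^2\Gamma_k)$ contribution as the (harmless, faster-decaying) source of the $1/T^2$ correction is the step that needs care, whereas the substitutions and the arithmetic of the resulting polynomial sums are routine given Theorem~\ref{th1}.
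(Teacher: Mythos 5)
Your proposal is correct and follows essentially the same route as the paper's proof: substitute the prescribed $e_k,\rho_k$ (with $\|x_{k-1}-\tilde x_k\|$, $\|y_{k-1}-\tilde y_k\|$ bounded via \eqref{bound x}--\eqref{bound xx} and Assumption \ref{assump1}(iii)) into \eqref{th result}, observe that the $2e_k/\Gamma_k$ and $\rho_k(1+k)/\Gamma_k$ sums are $\mathcal O(T^2)$ while the $4L^2\rho_k/\Gamma_k$ sum is only $\mathcal O(T)$ and hence yields the $D_2/T^2$ term, and count $\sum_k N_k=\mathcal O(T^2)$ samples. The only (immaterial) discrepancy is your constant $2B_1^2+2C^2$ versus the paper's $2B_1^2+C^2$ in bounding $\|x_{k-1}-\tilde x_k\|^2$.
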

\begin{proof}
Using the definition of the stepsizes, $p_k$, $e_k$, and $\rho_k$ one can obtain the following:
\begin{align*}
&\sum_{k=1}^T \tfrac{2e_k}{\Gamma_k}\leq \tfrac{c_1(2B_1^2+C^2)+c_2}{4L}\sum_{k=1}^T (k+1)=\left(\tfrac{c_1(2B_1^2+C^2)+c_2}{4L}\right)T(T+3).\\
&\sum_{k=1}^T \tfrac{\rho_k(1+k)}{\Gamma_k}\leq \tfrac{b_1(2B_2^2+C^2)+b_2}{4L}\sum_{k=1}^T k=\left(\tfrac{b_1(2B_2^2+C^2)+b_2}{8L}\right)T(T+1).\\
&\aj{\sum_{k=1}^T\tfrac{\rho_k}{\Gamma_k}=\left(\tfrac{b_1(2B_2^2+C^2)+b_2}{4L}\right)\sum_{k=1}^T \tfrac{k(k+1)}{(k+1)^2}\leq \left(\tfrac{b_1(2B_2^2+C^2)+b_2}{4L}\right)\sum_{k=1}^T 1=\left(\tfrac{b_1(2B_2^2+C^2)+b_2}{4L}\right)T.}
\end{align*}
Using the above inequalities in \eqref{th result}, we get the desired convergence result. Additionally, \aj{the} total number of sample gradients of the objective is $\sum_{k=1}^T N_k=\sum_{k=1}^T (k+1)=T(T+3)$ and \aj{the} total number of gradients of the constraint is $\sum_{k=1}^T p_k+q_k=\sum_{k=1}^T 2k+1=T(T+2)$. From \eqref{conv rate}, we have that $\mathbb E[\|\tilde y_N -z_N \|^2]\leq \mathcal O(1/T)=\epsilon$, hence, $\sum_{k=1}^T N_k=\mathcal O(1/\epsilon^2)$ and similarly $\sum_{k=1}^T p_k+q_k=\mathcal O(1/\epsilon^2)$. 
\end{proof}
\af{In the next corollary, we justify our choice of measure. We show that if $\mathbb E[\|\hat y_N^r -z_N \|^2]\leq\epsilon$, then the first order optimality condition \aj{for problem \eqref{p1}} holds within a ball \aj{with} radius $\sqrt \epsilon$. 

\begin{corollary}
Under the premises of Corollary \ref{cor1}, after running Algorithm \ref{alg1} for $T\geq D/\epsilon$ iterations, where $D\triangleq D_1+D_2$, the following holds. 
$$0\in \mathbb E[\nabla f(\aj{\hat y_N^r})]+\mathbb E[\partial \aj{h(\hat y_N^r)}]+\mathcal{B}\left(3L\sqrt \epsilon\right).$$\end{corollary}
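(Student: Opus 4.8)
The plan is to start from the exact first-order optimality condition of the proximal map that defines $\hat y_N^r$, rewrite the resulting (exact) subgradient of $h$ at $\hat y_N^r$ in terms of $\nabla f(\hat y_N^r)$ using Lipschitz continuity of $\nabla f$, and convert the residual $\|\hat y_N^r-z_N\|$ into the claimed ball radius via the choice $\lambda_N=\tfrac{1}{2L}$. Jensen's inequality together with the rate bound of Corollary~\ref{cor1} then upgrades the second-moment guarantee into the stated expectation inclusion.

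First, since $\hat y_N^r=\mbox{prox}_{\lambda_{N}h}\left(z_N-\lambda_N\nabla f(z_N)\right)$ is the exact minimizer of $u\mapsto h(u)+\tfrac{1}{2\lambda_N}\|u-(z_N-\lambda_N\nabla f(z_N))\|^2$, its optimality condition (equivalently, Lemma~\ref{error subdef} with $\rho=0$ and $v=0$) yields
$$s_N\triangleq\tfrac{1}{\lambda_N}(z_N-\hat y_N^r)-\nabla f(z_N)\in\partial h(\hat y_N^r).$$
Hence $\nabla f(\hat y_N^r)+s_N=\big(\nabla f(\hat y_N^r)-\nabla f(z_N)\big)+\tfrac{1}{\lambda_N}(z_N-\hat y_N^r)$, and by Assumption~\ref{assump1}(ii) and $\lambda_N=\tfrac{1}{2L}$,
$$\|\nabla f(\hat y_N^r)+s_N\|\leq L\|\hat y_N^r-z_N\|+2L\|\hat y_N^r-z_N\|=3L\|\hat y_N^r-z_N\|.$$
This inequality holds for every realization of the trajectory and of the random index $N$.

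Next, take expectations over both the stochastic trajectory and $N$. Since $\|\cdot\|$ is convex, Jensen gives $\mathbb E\big[\|\nabla f(\hat y_N^r)+s_N\|\big]\leq 3L\,\mathbb E\|\hat y_N^r-z_N\|\leq 3L\sqrt{\mathbb E\|\hat y_N^r-z_N\|^2}$. Because $\|\hat y_N-z_N\|^2\geq 0$, Corollary~\ref{cor1} implies $\mathbb E\|\hat y_N^r-z_N\|^2\leq\mathbb E[\|\hat y_N-z_N\|^2+\|\hat y_N^r-z_N\|^2]\leq \tfrac{D_1}{T}+\tfrac{D_2}{T^2}\leq\tfrac{D}{T}$, which is at most $\epsilon$ whenever $T\geq D/\epsilon$. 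Applying Jensen once more, $\|\mathbb E[\nabla f(\hat y_N^r)+s_N]\|\leq\mathbb E\|\nabla f(\hat y_N^r)+s_N\|\leq 3L\sqrt\epsilon$. Setting $\xi\triangleq\mathbb E[\nabla f(\hat y_N^r)+s_N]\in\mathbb E[\nabla f(\hat y_N^r)]+\mathbb E[\partial h(\hat y_N^r)]$, we conclude $0=\xi-\xi\in\mathbb E[\nabla f(\hat y_N^r)]+\mathbb E[\partial h(\hat y_N^r)]+\mathcal B(3L\sqrt\epsilon)$, as claimed.

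There is no serious obstacle; the argument is essentially bookkeeping. The only points that need mild care are: (i) establishing the pointwise bound $\|\nabla f(\hat y_N^r)+s_N\|\leq 3L\|\hat y_N^r-z_N\|$ for each realization \emph{before} taking expectations, since $N$ is random; and (ii) discarding the nonnegative term $\|\hat y_N-z_N\|^2$ so that the single-term second moment inherits the $D/T$ bound from Corollary~\ref{cor1}, and hence is $\leq\epsilon$ under $T\geq D/\epsilon$.
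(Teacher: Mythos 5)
Your proposal is correct and follows essentially the same route as the paper: both start from the exact optimality condition of the proximal map defining $\hat y_N^r$, add and subtract $\nabla f(\hat y_N^r)$, bound the residual by $3L\|\hat y_N^r-z_N\|$ using Lipschitz continuity of $\nabla f$ and $\lambda_N=\tfrac{1}{2L}$, and invoke the rate bound of Corollary~\ref{cor1} together with Jensen's inequality. Your write-up is in fact slightly more careful than the paper's, since you make explicit the two applications of Jensen and the discarding of the nonnegative term $\|\hat y_N-z_N\|^2$, both of which the paper leaves implicit.
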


\begin{proof}
Suppose $\hat y_N^r$ is a solution of $ \mbox{prox}_{\lambda_{N}h}\left(z_N -\lambda_N \nabla f(z_N)\right)$. Then $0\in \partial h(\hat y_N^r)+\nabla f(z_N)+(\hat y_N^r-z_N)/\lambda. $
Adding and subtracting $\nabla f(\hat y_N^r)$ form the right-hand side of the above inequality, gives the following:
\begin{align}\label{subgrad h}
0\in \partial h(\hat y_N^r)+\nabla f(z_N)+{1/\lambda}(\hat y_N^r-z_N)\pm \nabla f(\hat y_N^r). 
\end{align}
Moreover, using the fact that $T\geq D/\epsilon$ and $\mathbb E[\|\hat y_N^r -z_N \|^2]\leq\tfrac{D}{T}=\epsilon$ one can show the following result. 
\begin{align*}
\mathbb E\left[\|\nabla f(z_N)-\nabla f(\hat y_N^r)+1/\lambda(\hat y_N^r-z_N)\|\right]&\leq \mathbb E\left[L\|\hat y_N^r-z_N\|+1/\lambda\|\hat y_N^r-z_N\|\right]\leq   3L\sqrt {\epsilon}, 
\end{align*}
where we use the fact that  $\lambda=1/(2L)$. Using the above inequality and taking expectation from \eqref{subgrad h} the desired result can be obtained. 
\end{proof}}

In the next section, we show how Algorithm \ref{alg1} can be customized to solve problem \eqref{p0}.
\subsection{Constrained Optimization}\label{const opt}
Recall that problem \eqref{p0} can be written in a composite form using an indicator function, i.e. problem \eqref{p0} is equivalent to  $\min_x g(x)=f(x)+h(x)$, where $h(x)= \mathbb{I}_\Theta(x)$ and $\Theta=\{x\mid x\in X, \ \phi_i(x)\leq 0, \ \forall i=1,\hdots,m\}$. In step (2) and (3) of Algorithm \ref{alg1}, one needs to compute the proximal operators inexactly which are of the following form:
\begin{align}\label{co}
&\min_{u\in X} \quad {1\over 2\gamma}\left\|u- y\right\|^2\quad \mbox{s.t.} \quad \phi_i(u)\leq 0,\quad  i=1,\hdots,m, 
\end{align}
for some $y\in\mathbb{R}^n$. Problem \eqref{co} has a strongly convex objective function with convex constraints, and there has been variety of methods developed to solve such problems. One of the efficient methods for solving large-scale convex constrained optimization problem with strongly convex objective  \af{that satisfies Assumption \ref{assump2}} is first-order primal-dual scheme that guarantees a convergence rate of $\mathcal O(1/\sqrt \epsilon)$ in terms of suboptimality and infeasibility, e.g., \cite{he2015mirror,hamedani2018primal}. Next, we discuss some details of implementing such schemes as an inner algorithm for solving the subproblems in step (2) and (3) of Algorithm \ref{alg1}.

Based on Corollary \ref{cor1}, to obtain a convergence rate of $\mathcal O(1/T)$, one needs to find an $e_k$- and $\epsilon_k$-approximated solution in the sense of \eqref{prox error}. Note that since the nonsmooth part of the objective function, $h(x)$, in the proximal subproblem is an indicator function, \eqref{prox error} implies that the approximate solution of the subproblem has to be feasible, otherwise the indicator function on the left-hand side of \eqref{prox error} goes to infinity. However, the first-order primal-dual methods mentioned above find an approximate solution which might be infeasible. To remedy this issue, let $x^{\circ}$ be a slater feasible point of \eqref{co} (i.e., $\phi_i(x^{\circ})<0$ for all $i=1,\hdots,m$) and let $\hat x$ be the output of the inner algorithm $\mathcal M$ such that it is $\epsilon$-suboptimal and  $\epsilon$-infeasible, then $\tilde x=\kappa x^{\circ}+(1-\kappa)\hat x$ is a feasible point of \eqref{co} for $\kappa\triangleq \max_i \tfrac{[\phi_i(\hat x)]_+}{[\phi_i(\hat x)]_+-\phi_i(x^{\circ})}$ which is $\mathcal O(\epsilon)$-suboptimal, see the next lemma for the proof.
\begin{algorithm}[htbp]
\caption{IPAG for constrained optimization}
\label{alg2}
 {\bf input:} \af{$x^{\circ},x_0,y_0 \in \mathbb R^n$ and positive sequences $\{\alpha_k,\gamma_k,\lambda_k\}_k$, and Algorithm $\mathcal M$ satisfying Assumption \ref{assump2}}; \\
 {\bf for} $k=1\hdots T$ {\bf do}  \\
\mbox{(1)}\quad $z_k =(1-\alpha_k)y_{k-1} +\alpha_kx_{k-1}$;  \\
\mbox{(2)}\quad $x\approx \Pi_\Theta\left(x_{k-1}-\gamma_k (\nabla f(z_k )+\bar \xi_k)\right)$ (solved inexactly by algorithm $\mathcal M$ with $q_k$ iterations); \\
\mbox{(3)}\quad  $y \approx\Pi_\Theta\left(z_k -\lambda_k (\nabla f(z_k )+\bar \xi_k)\right)$  (solved inexactly by algorithm $\mathcal M$ with $p_k$ iterations);\\
\mbox{(4)}\quad $\kappa=\max_i \tfrac{[\phi_i(x)]_+}{[\phi_i(x)]_+-\phi_i(x^{\circ})}$ and $\tilde \kappa=\max_i \tfrac{[\phi_i(y)]_+}{[\phi_i(y)]_+-\phi_i(x^{\circ})}$;\\
\mbox{(5)}\quad  $ x_k=\kappa x^{\circ}+(1-\kappa) x$;\\
\mbox{(6)}\quad  $ y_k=\tilde \kappa x^{\circ}+(1-\tilde \kappa) y$;\\
{\bf end for}\\
{\bf Output:} \af{$z_N$ where $N$ is randomly selected} from $\{T/2,\hdots,T\}$ with $\mbox{Prob}\{N=k\}=\frac{1}{\sum_{k=\lfloor T/2\rfloor}^T \tfrac{1-L \lambda_k}{16\lambda_k \Gamma_k}} \left(\tfrac{1-L \lambda_k}{16\lambda_k \Gamma_k}\right)$. 
\end{algorithm}

\begin{lemma}\label{convex comb}
Let $x^{\circ}$ be a strictly feasible point of \eqref{co} and $\hat x$ be the output of an inner algorithm $\mathcal M$ such that it is $\epsilon$-suboptimal and  $\epsilon$-infeasible solution of \eqref{co}. Then $\tilde x=\kappa x^{\circ}+(1-\kappa)\hat x$ is a feasible point of \eqref{co} and  an $\mathcal O(\epsilon)$-approximate solution in the sense of \eqref{prox error} 
where $\kappa= \max_i \tfrac{[\phi_i(\hat x)]_+}{[\phi_i(\hat x)]_+-\phi_i(x^{\circ})}$. 
\end{lemma}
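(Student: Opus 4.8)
The plan is to establish the two claims---feasibility of $\tilde x$ and its $\mathcal{O}(\epsilon)$-suboptimality---separately, and then combine them to verify \eqref{prox error}. For feasibility, I would first observe that $\kappa\in[0,1)$: since $x^{\circ}$ is strictly feasible, $\phi_i(x^{\circ})<0$ for every $i$, so each denominator $[\phi_i(\hat x)]_+-\phi_i(x^{\circ})$ is strictly positive and strictly larger than its nonnegative numerator $[\phi_i(\hat x)]_+$. Hence $\tilde x=\kappa x^{\circ}+(1-\kappa)\hat x$ is a genuine convex combination, and since $X$ is convex with $x^{\circ},\hat x\in X$, we get $\tilde x\in X$. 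Then for each constraint index $i$, convexity of $\phi_i$ gives $\phi_i(\tilde x)\le\kappa\phi_i(x^{\circ})+(1-\kappa)\phi_i(\hat x)$; I would split into the case $\phi_i(\hat x)\le 0$, where both terms on the right are nonpositive, and the case $\phi_i(\hat x)>0$, where the choice $\kappa\ge\tfrac{\phi_i(\hat x)}{\phi_i(\hat x)-\phi_i(x^{\circ})}$ (which is what the $\max_i$ guarantees) is exactly what forces $\kappa\phi_i(x^{\circ})+(1-\kappa)\phi_i(\hat x)\le 0$. This shows $\tilde x\in\Theta$, i.e., $\tilde x$ is feasible for \eqref{co}.

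Next I would quantify $\kappa$. Let $\delta\triangleq\min_i(-\phi_i(x^{\circ}))>0$ denote the Slater gap. Since $\hat x$ is $\epsilon$-infeasible, $[\phi_i(\hat x)]_+\le\epsilon$ for all $i$, so $\kappa=\max_i \tfrac{[\phi_i(\hat x)]_+}{[\phi_i(\hat x)]_+-\phi_i(x^{\circ})}\le\max_i\tfrac{[\phi_i(\hat x)]_+}{-\phi_i(x^{\circ})}\le\tfrac{\epsilon}{\delta}$, i.e., $\kappa=\mathcal{O}(\epsilon)$. For the suboptimality, write $q(u)\triangleq\tfrac{1}{2\gamma}\|u-y\|^2$ and let $x^\star$ be the exact minimizer of \eqref{co}. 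Convexity of $q$ yields $q(\tilde x)\le\kappa q(x^{\circ})+(1-\kappa)q(\hat x)\le q(\hat x)+\kappa|q(x^{\circ})-q(\hat x)|$. Because $X$ is compact and, by Assumption \ref{assump1}(iii) together with the Remark, all the relevant iterates lie in a bounded set, $|q(x^{\circ})-q(\hat x)|$ is bounded by an absolute constant, so $q(\tilde x)\le q(\hat x)+\mathcal{O}(\epsilon)$; combined with the $\epsilon$-suboptimality of $\hat x$, namely $q(\hat x)\le q(x^\star)+\epsilon$, this gives $q(\tilde x)\le q(x^\star)+\mathcal{O}(\epsilon)$.

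Finally, combining the two parts: feasibility of $\tilde x$ gives $h(\tilde x)=\mathbb{I}_\Theta(\tilde x)=0$, while $\min_{x}\{q(x)+h(x)\}=\min_{x\in\Theta}q(x)=q(x^\star)$. Hence $q(\tilde x)+h(\tilde x)=q(\tilde x)\le q(x^\star)+\mathcal{O}(\epsilon)=\min_{x}\{q(x)+h(x)\}+\mathcal{O}(\epsilon)$, which is precisely \eqref{prox error} with $\rho=\mathcal{O}(\epsilon)$. I expect the main obstacle to be the feasibility argument---specifically, checking that the single scalar $\kappa$ defined through the $\max_i$ simultaneously pulls every constraint back to feasibility, together with the sign bookkeeping in the case $\phi_i(\hat x)>0$; once boundedness of the iterates is invoked, the suboptimality estimate and the final assembly are routine.
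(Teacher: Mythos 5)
Your proposal is correct and follows essentially the same route as the paper: feasibility of $\tilde x$ via convexity of $X$ and of the $\phi_i$ together with the definition of $\kappa$, the bound $\kappa\leq \epsilon/\min_i\{-\phi_i(x^{\circ})\}$, and then transferring the $\epsilon$-suboptimality of $\hat x$ to $\tilde x$ using boundedness of the relevant points. The only (cosmetic) difference is that you invoke convexity of $u\mapsto\tfrac{1}{2\gamma}\|u-y\|^2$ directly, whereas the paper expands $\|\kappa(x^{\circ}-y)+(1-\kappa)(\hat x-y)\|^2$ and bounds the resulting terms one by one; both yield the same $\mathcal O(\epsilon)$ estimate.
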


\begin{proof}
Let $x^*$ be the optimal solution of \eqref{co}. Since $\hat x$ is $\epsilon$-suboptimal and  $\epsilon$-infeasible solution, $\hat x\in X$ and the following holds:
\begin{align*}
\big|\tfrac{1}{2\gamma}\|\hat x-y\|^2-\tfrac{1}{2\gamma}\|x^*-y\|\big|\leq \epsilon,\quad \mbox{and}\quad [\phi_i(\hat x)]_+\leq \epsilon, \ \forall i\in\{1,\hdots,m\}.
\end{align*} 
Since $X$ is a convex set and $x^{\circ},\hat x\in X$, then clearly $\kappa x^{\circ}+(1-\kappa)\hat x\in X$ for any $\kappa\in[0,1]$. Moreover, $\phi_i(x^\circ)<0$ for all $i$, hence $\kappa= \max_i \tfrac{[\phi_i(\hat x)]_+}{[\phi_i(\hat x)]_+-\phi_i(x^{\circ})}\in [0,1]$ and $\kappa\leq \tfrac{\epsilon}{\min_i\{-\phi_i(x^{\circ})\}}$. From convexity of $\phi_i(\cdot)$, one can show the following for all $i=1,\hdots,m$.
\begin{align*}
\phi_i(\tilde x)\leq \kappa\phi_i(x^{\circ})+(1-\kappa)\phi_i(\hat x)\leq 0,
\end{align*} 
where we used the definition of $\kappa$. Hence, $\tilde x$ is a feasible point of \eqref{co}. Next, we verify $\tilde x$ satisfies \eqref{prox error}. 
\begin{align*}
&\tfrac{1}{2\gamma}\|\tilde x-y\|^2+\mathbb I_\Theta(\tilde x)-\tfrac{1}{2\gamma}\|x^*-y\|^2-\mathbb I_\Theta(x^*)\\
&\quad =\tfrac{1}{2\gamma}\|\tilde x-y\pm x^{\circ}\|^2-\tfrac{1}{2\gamma}\|x^*-y\|^2\\
&\quad \leq \tfrac{\kappa^2}{2\gamma}\|x^\circ-y\|^2+\tfrac{(1-\kappa)^2}{2\gamma}\|\hat x-y\|^2+\tfrac{\kappa(1-\kappa)}{\gamma}\|x^\circ-y\|^2\|\hat x-y\|^2-\tfrac{1}{2\gamma}\|x^*-y\|^2\\
&\quad = \tfrac{\kappa^2}{2\gamma}\|x^\circ-y\|^2+\tfrac{\kappa(1-\kappa)}{\gamma}\|x^\circ-y\|^2\|\hat x-y\|^2+(1-\kappa^2)\left[\tfrac{1}{2\gamma}\|\hat x-y\|-\tfrac{1}{2\gamma}\|x^*-y\|\right]\\
&\qquad-\tfrac{1-(1-\kappa^2)}{2\gamma}\|x^*-y\|^2\\
&\quad \leq \tfrac{\kappa^2}{2\gamma}\|x^\circ-y\|^2+\tfrac{\kappa(1-\kappa)}{\gamma}\|x^\circ-y\|^2\|\hat x-y\|^2+\epsilon\leq \mathcal O(\epsilon),
\end{align*}
 where we used the fact that $\hat x, x^*$ are feasible, $\hat x$ is $\epsilon$-suboptimal and $\kappa\leq \tfrac{\epsilon}{\min_i\{-\phi_i(x^{\circ})\}}$.
 \end{proof}
 In the following corollary, we show that the output of Algorithm \ref{alg2} is feasible to problem \eqref{p0} and satisfies $\epsilon$-first-order optimality condition.  
\begin{corollary}\label{cor4}
Consider problem \eqref{p0}. Suppose \af{Assumption \ref{assump1} and \ref{assump2} hold} and let $\{y_k ,x_k,z_k \}$ \aj{be} generated by Algorithm \ref{alg2} such that the stepsizes and parameters are chosen as in Corollary \ref{cor1}. Then the iterates are feasible and $\mathbb E\left[\|z_N-\Pi_\Theta\left(z_N-\lambda_N\nabla f(z_N)\right)\|^2\right]\leq \mathcal O(\epsilon)$ holds with an oracle complexity $\mathcal O(1/\epsilon^2)$. 
\end{corollary}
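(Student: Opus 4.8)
The plan is to recognize Algorithm \ref{alg2} as the concrete instantiation of Algorithm \ref{alg1} for problem \eqref{p1} with $h=\mathbb{I}_\Theta$, and then to read off the conclusion from Corollary \ref{cor1}. First I would settle feasibility. By Assumption \ref{assump1}(i) a Slater point $x^{\circ}$ with $\phi_i(x^{\circ})<0$ and $x^{\circ}\in\mbox{\bf relint}(X)$ exists. Arguing by induction on $k$ (with $x_0,y_0\in\Theta$, e.g. taken equal to $x^{\circ}$): if $x_{k-1},y_{k-1}\in\Theta$, then $z_k=(1-\alpha_k)y_{k-1}+\alpha_kx_{k-1}\in\Theta$ since $\Theta$ is convex; the inner outputs $x,y$ of step (2)--(3) lie in $X$ and are $\mathcal O(1/q_k^2)$- and $\mathcal O(1/p_k^2)$-infeasible, so in steps (4)--(6) we have $\kappa,\tilde\kappa\in[0,1]$ and, by Lemma \ref{convex comb}, $x_k,y_k\in\Theta$. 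Hence every iterate, and in particular $z_N$, is feasible for \eqref{p0}, which is the first claim.

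Next I would verify that the feasibility-corrected inner solver is a legitimate choice of $\mathcal M$ in the sense of Assumption \ref{assump2}. The subproblems in steps (2)--(3) have the form \eqref{co}, a $1/\gamma$- (resp. $1/\lambda$-) strongly convex objective over the convex constraint set, so a first-order primal-dual scheme such as \cite{he2015mirror,hamedani2018primal} produces, after $t$ inner steps, an $\mathcal O(1/t^2)$-suboptimal and $\mathcal O(1/t^2)$-infeasible point whose accuracy constant scales with the squared distance from the initialization to the optimum (this is the restatement of the quoted $\mathcal O(1/\sqrt\epsilon)$ iteration complexity). Composing this output with the convex combination $\tilde x=\kappa x^{\circ}+(1-\kappa)\hat x$ of Lemma \ref{convex comb} yields a feasible point that is $\mathcal O(1/t^2)$-approximate in the sense of \eqref{prox error}. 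Thus, with $q_k=k$ and $p_k=k+1$, the errors $e_k$ and $\rho_k$ have exactly the form posited in Corollary \ref{cor1}, with constants $c_1,c_2,b_1,b_2$ that can be taken uniform in $k$ using the a priori bounds $B_1,B_2,B_3,C$ from the Remark and Assumption \ref{assump1}(iii).

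The rate then follows immediately. Since $h=\mathbb{I}_\Theta$, the proximal map is the Euclidean projection, so $\hat y_N^r=\mbox{prox}_{\lambda_N h}(z_N-\lambda_N\nabla f(z_N))=\Pi_\Theta(z_N-\lambda_N\nabla f(z_N))$, and therefore
$$\mathbb E\!\left[\|z_N-\Pi_\Theta(z_N-\lambda_N\nabla f(z_N))\|^2\right]\le \mathbb E\!\left[\|\hat y_N-z_N\|^2+\|\hat y_N^r-z_N\|^2\right]\le \tfrac{D_1}{T}+\tfrac{D_2}{T^2}=\mathcal O(1/T)$$
by Corollary \ref{cor1}. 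Choosing $T=\mathcal O(1/\epsilon)$ gives the claimed $\mathcal O(\epsilon)$ bound. For the oracle complexity, the number of sampled objective gradients is $\sum_{k=1}^T N_k=\sum_{k=1}^T(k+1)=T(T+3)=\mathcal O(1/\epsilon^2)$, and the number of constraint gradients is $\sum_{k=1}^T(p_k+q_k)=\sum_{k=1}^T(2k+1)=T(T+2)=\mathcal O(1/\epsilon^2)$.

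The main obstacle I expect is the second step: carefully checking that the primal-dual inner method, together with the feasibility correction of Lemma \ref{convex comb}, genuinely fits the $\mathcal O(1/t^2)$ template of Assumption \ref{assump2} — specifically, that the additive $\mathcal O(\epsilon)$ slack introduced by passing from $\hat x$ to $\tilde x$ is itself of order $1/t^2$ and does not degrade the rate, and that the various problem-dependent constants can be made uniform across iterations via the boundedness estimates already established. Everything after that is bookkeeping layered on top of Corollary \ref{cor1}.
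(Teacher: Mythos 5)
Your proposal is correct and follows essentially the same route as the paper: feasibility of the iterates via Lemma \ref{convex comb}, the rate and oracle complexity imported from Corollary \ref{cor1}, and the identification $\hat y_N^r=\Pi_\Theta(z_N-\lambda_N\nabla f(z_N))$ because the proximal map of $\mathbb{I}_\Theta$ is the Euclidean projection. The extra care you take in checking that the feasibility-corrected primal--dual inner solver satisfies Assumption \ref{assump2} is a reasonable elaboration of details the paper disposes of by reference, but it does not change the argument.
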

\begin{proof}
From Lemma \ref{convex comb} we know that the iterates are feasible and from Corollary \ref{cor1}, we conclude that $\mathbb E[\hat \|y_N^r-z_N\|^2]\leq \epsilon$ with an oracle complexity $\mathcal O(1/\epsilon^2)$. Considering problem \eqref{p0}, definition of $\hat y_N^r$ is equivalent to $\hat y_N^r=\Pi_\Theta\left(z_N-\lambda_N\nabla f(z_N)\right)$ which implies the desired result.
\end{proof}

\section{NUMERICAL EXPERIMENTS}\label{numer}
The goal of this section is to present some computational results to compare the performance of the IPAG method with another competitive scheme.  \aj{For Algorithm \ref{alg2}, we consider accelerated primal-dual algorithm with backtracking (APDB)} method introduced by \cite{hamedani2018primal} as the inner algorithm $\mathcal M$. \aj{In particular, APDB is a primal-dual scheme with a convergence guarantee of \af{$\mathcal O(1/T^2)$} in terms of suboptimality and infeasibility when implemented for solving \eqref{co} which satisfies the requirements of Corollary \ref{cor4}, i.e., produces approximate solutions for the proximal subproblems.}

\indent {\bf Example.} The IPAG method is benchmarked against the inexact constrained proximal point algorithm (ICPP) introduced by \cite{boob2019stochastic}.  Consider the following \aj{stochastic} quadratic programming problem:
\begin{align*}
\min_{-10\leq x\leq 10} \ &f(x)\triangleq-\tfrac{\epsilon}{2}\|DBx\|^2+ \tfrac{\tau}{2}\mathbb E[\|Ax-b(\xi)\|^2]\\
\text{s.t.} \quad  &\ \tfrac{1}{2}x^T Q_{i} x+d_{i}^Tx-c_i \leq 0, \quad \forall i=1\hdots m,
\end{align*} 
where $ \aj{A} \in \mathbb R^{p\times n}$, $p=n/2$, $B \in \mathbb R^{n\times n}, D \in \mathbb R^{n\times n}$ is a diagonal matrix, $b(\xi)= b+\omega \in \mathbb R^{p\times 1}$, where the elements of $\omega$  have an i.i.d. \aj{standard normal distribution}. 
\mb{ The entries of matrices $A$, $B$, and vector $b$ are generated by  sampling from the uniform distribution ${U}$[0,1] and the diagonal entries of matrix $D$ are generated by sampling from the discrete uniform distribution ${U}$\{1,1000\}}.
Moreover, $(\delta, \tau) \in \mathbb R_{++}^2$ , $ Q_i \in \mathbb R^{n\times n}$, $ d_i \in \mathbb R^{n\times 1}$ and $ c_i \in \mathbb R$ for all $i\in\{1,\hdots,m\}$. We chose scalers $\delta$ and $\tau$ such that $\lambda_{min}(\nabla^2f)<0$, i.e., minimum eigenvalue of the Hessian is negative. Note that Assumption \ref{assump1}(i) holds for $x^{\circ}=\mathbf 0$, where $\mathbf 0$ is the vector of zeros. 

\begin{table}[htb]\scriptsize
	\centering
\begin{tabular}{|c|c|c|c|c|c|c|c|}
\hline
\multicolumn{2}{|c|}{} & \multicolumn{3}{c|}{IPAG}  & \multicolumn{3}{c|}{ICPP}   \\ \hline
n          & m         & $f(x_T)$      & Infeas. & CPU(s) &  $f(x_T)$        & Infeas. & CPU(s) \\ \hline
100        & 25        & -6.78e+5 & 0      & 12.10  & -4.85e+4 & 3.56e-1 & 32.99  \\ \hline
100        & 50        & -8.53e+5 & 0      & 31.76  & -2.42e+4 & 3.23e-1 & 65.79  \\ \hline
100        & 75        & -4.18e+5 & 0      & 52.43  & -2.16e+4 & 3.75e-1 & 110.53 \\ \hline
200        & 25        & -3.22e+6 & 0      & 65.56  & -1.81e+5 & 2.56e-1 & 132.18 \\ \hline
200        & 50        & -1.85e+6 & 0      & 90.49  & -8.45e+4 & 4.54e-1 & 208.84 \\ \hline
200        & 75        & -1.33e+6 & 0      & 138.75 & -7.78e+4 & 3.93e-1 & 287.20 \\ \hline
\end{tabular}
\begin{tabular}{|c|c|c|c|c|}
\hline
\multicolumn{3}{|c|}{} &{IPAG}  &{ICPP}   \\ \hline
n          & m      &std.   & $f(x_T)$  &  $f(x_T)$       \\ \hline
100        & 25       &1&-6.7866e+5 & -4.8563e+4 \\ \hline
100        & 25        &5&-6.5288e+5&-4.8596e+4 \\ \hline
100        & 25        & 10&-6.2336e+5&-4.8528e+4 \\ \hline
200        & 50        & 1& -1.8552e+6 & -8.4550e+4\\ \hline
200        & 50        & 5&-1.8452e+6&-8.5264e+4 \\ \hline
200        & 50        &10&-1.8383e+6&-8.6096e+4 \\ \hline
\end{tabular}\caption{Comparing IPAG and ICPP.}
\label{tab1}
\end{table}

In Table \ref{tab1} (left), we compared the objective value, CPU time, and infeasibility \af{(Infeas.)} of our proposed method with ICPP \cite{boob2019stochastic} and in Table \ref{tab1} (right) we compared \aj{the} methods for different choices of standard deviation (std.) \af{of $\omega$}. To have a fair comparison, we fixed the oracle complexity (i.e. the number of computed gradients is equal for both methods). As it can be seen in the table, for different choices of $m$ and $n$, IPAG scheme outperforms ICPP. For instance, when we have 25 constraints and $n=100$, the objective value for our scheme \aj{reaches} $f(x_T)=-6.78e+5 $ which is significantly smaller than $-4.85e+4$ for ICPP method. Note that our scheme, in contrast to ICPP, obtains a feasible solution \aj{at} each iteration. Similar behavior can be \aj{observed} for different choices of the standard deviation \af{in Table \ref{tab1} (right)}. 
 standard deviation.

\footnotesize

\bibliographystyle{spmpsci}    

\bibliography{biblio}

\end{document}